\newtheorem{thm}{Theorem}[section]
\newtheorem{lem}[thm]{Lemma}
\newtheorem{defn}[thm]{Definition}
\newtheorem{rmk}[thm]{Remark}
\newtheorem{prop}{Proposition}
\title{A unifying computations of Whittle's Index for Markovian bandits}
\author{Urtzi Ayesta, Manu K. Gupta\footnote{Corresponding author: Manu K. Gupta (manugupta@linuxmail.org)}, and Ina Maria Verloop\\IRIT, 2 rue C. Camichel, Toulouse, France}
\begin{document}
\maketitle

\begin{abstract}
The multi-armed restless bandit framework allows to model a wide variety of decision-making problems in areas as diverse as industrial engineering, computer communication, operations research, financial engineering, communication networks etc. In a seminal work, Whittle developed a methodology to derive well-performing  (Whittle's) index policies that are obtained by solving a relaxed version of the original problem. However, the computation of Whittle's index itself is a difficult problem and hence researchers focused on calculating Whittle's index numerically or with a problem dependent approach. 

In our main contribution we derive an analytical expression for Whittle's index for any Markovian bandit with both finite and infinite transition rates. We  derive sufficient conditions for the optimal solution of the relaxed problem to be of threshold type, and obtain conditions for the bandit to be indexable, a property assuring the existence of Whittle's index.  Our solution approach provides a unifying expression for Whittle's index, which we highlight by retrieving known indices from literature  as particular cases. The applicability of finite rates is illustrated with the machine repairmen problem, and that of  infinite rates by an example of communication networks where transmission rates react instantaneously to packet losses.
\end{abstract}

\textbf{keywords:} Restless bandits, Whittle index.

\section{Introduction}

Markov Decision Processes (MDPs) provide a mathematical framework for sequential decision making where outcomes are random. Formally, an MDP is a sequential stochastic control process, where a decision maker aims at minimizing its long term cost. The basic setup is as follows, at each time step, a state dependent cost is accrued, the decision maker chooses an action among the available ones, and the process randomly moves to a new state. Due to their broad applicability, MDPs are found in many areas, including artificial intelligence, economics, and operations research. 

An MDP can be solved via dynamic programming, however, this is a computationally intractable task for realistic model sizes. As a result, classes of MDPs that are analytically tractable have received a lot of attention. One such class is the Multi-Armed Bandit Problem  (MABP) framework. In an MABP, there are multiple concurrent projects or bandits. The decision maker knows the states of all bandits and the cost in every state, and aims at minimizing the average cost. At every decision epoch, the decision maker needs to select one bandit, the state of this selected bandit evolves stochastically, while the states of all other bandits remain \textit{frozen}.  In a ground-breaking result, Gittins showed that the optimal policy that solves an MABP is an index rule, nowadays referred to as Gittins' index policy \citep{gittins2011multi}. Thus, for each bandit, one calculates Gittins' index, which depends only on its own current state and stochastic evolution. The optimal policy activates the bandit with highest current index in each decision epoch.

The Restless Multi-Armed Bandit Problem (RMABP), introduced in \cite{whittle1988restless}, is a more general class of problems, in which  the states of non-selected bandits also evolve randomly. That is, in contrary to an MABP, here the non-selected bandits do not remain \textit{frozen}. RMABPs have become extremely popular over the years, and have been applied in many contexts, including  inventory routing, machine maintenance, health-care systems, outsourcing warranty repair, etc.  RMABPs can not be solved analytically, except for some toy examples. 
Whittle developed a methodology to obtain heuristics by  solving a relaxed version of the RMABP. The obtained heuristics, nowadays known as Whittle's index policy, rely on calculating Whittle's index for each of the bandits, and activating in every decision epoch the bandit with highest Whittle's index. It has been reported on numerous instances that Whittle's index policy provides strikingly good performance, and it has been shown to be asymptotically optimal as the number of bandits grows large.

Fundamental questions regarding Whittle's index policy concern their existence and their complexity in computation. To prove existence,  one needs to establish a technical property known as \textit{indexability}.  Computing  Whittle's index  might be involved, and in practice the indices are computed   on a problem-to-problem basis (see more in Section~\ref{sec:relatedwork}), either numerically or analytically.

In this paper, we focus on the average performance criterion (as presented in the original paper by Whittle). We allow both finite and infinite rates at which a bandit changes state. We first present a general algorithm that, although computationally demanding, determines whether the problem is indexable  and calculates the index for any RMABP and allows multi-dimensional state space evolution. For a one-dimensional bandit, we then derive sufficient conditions for a problem to be indexable and for the optimal solution to a RMABP to be of threshold type. Then, in our main result, we show that if threshold policies are optimal and if a certain monotonicity condition holds,  Whittle's index can be expressed as a function of the steady-state distribution. We apply our unifying characterization to several problems considered in the literature to show that they are indexable, that threshold policies are optimal, and finally to retrieve Whittle's index as a direct application of our unifying  analytical expression.

\subsection{Related literature}
\label{sec:relatedwork}

A classical reference for MDPs is \cite{puterman2014markov}, and a comprehensive coverage for MABP and RMABP is given in \cite{gittins2011multi}.  Book length treatments of restless bandits can be found in \cite{jacko2010dynamic} and \cite{ruiz2008indexable}. A discussion on the nearly-optimal performance of Whittle's index is given in \cite{nino2007dynamic}, and asymptotic optimality of Whittle's index as the number of bandits grows is shown in \cite{weber1990index, verloop2016asymptotically}. Further restless bandit formulation has been used in diverse domains (see \cite{james2016developing}, \cite{group}).

For average cost criterion and with a countable state space, \cite{nino2006restless} gives sufficient conditions for an RMABP to be indexable and provides an analytical expression (same as ours) for the Whittle index. One of the   conditions consists in showing that the so-called marginal workloads are strictly positive for any set of states. This ensures that optimal policies are of threshold type. 
Instead, in this paper,  we provide an algorithm that does not require threshold optimality, as might be of interest when a bandit lives in a multi-dimensional state space and threshold optimality might be impossible to establish. 
In addition, in the case when threshold optimality can be established independently, we show that a weaker assumption on the marginal workloads is sufficient for the results to hold. 

Even though Whittle's seminal work introduced Whittle's index within the context of average cost criterion, a large body of work has focused on tackling an RMABP under the total discounted cost criterion.  For the discounted cost criterion and finite state space, \cite{nino2007dynamic} provides a thorough analysis and efficient algorithms (based on linear programming) to establish indexability and to give an expression for Whittle's Index. The same approach was undertaken to obtain the Whittle's index for a general RMABP in~\cite{nino2006restless}.


As explained in the introduction, the analytical computation of the index has mostly been carried out on a problem-to-problem basis. The main idea to calculate them  is to sweep the state space, by recursively identifying and calculating the states with higher Whittle's indices.  This can be done by iterative schemes as for example in {\cite{borkar2017whittle} for a processor sharing queue, \cite{borkar2017index} in a problem of cloud computing, \cite{borkar2017opportunistic} for a scheduling problem in a wireless setting, and \cite{pattathil2017distributed} in the context of content delivery networks.} And in some particular cases analytically, see for example \cite{argon2009dynamic} for a load balancing problem with dedicated arrivals, \cite{opp2005outsourcing} for  outsourcing warranty repairs, \cite{ayer2019prioritizing} for Hepatitis C Treatment in US Prisons, and \cite{TON_maialen} for restless bandits that are of birth and death type.
Another popular approach in the literature to calculate the index for the average cost criterion has been to calculate first the index for the discounted case, and then let the discounting factor tend to one. This is the approach undertaken in  e.g.\ \cite{glazebrook2005index} for the machine repairman problem, 
\cite{ansell2003whittle} for a multi-class queue with convex holding costs, and \cite{nino2002dynamic} for a queue with admission control. 
A feature that renders the discounted problem more amenable is that the dynamic programming equation of the MDP has only one unknown, the value function, whereas for the average cost criterion, the dynamic programming equation has two unknowns, one being the value function and the other one the average performance (\cite{puterman2014markov}). 

In this paper, we take a direct approach and work directly with the average cost criterion. This will allow us to obtain a unifying framework to write an analytical expression for the Whittle's index.
As we will explain in Sections~\ref{finite_transitions_examples}~and~\ref{infinite_transition_rates}, all applications mentioned above for which Whittle's index was found are particular instances of our unifying approach.   Note that this includes the examples for which  Whittle's index was so far only calculated by iterative numerical scheme. 

\section{Model description}\label{sec:model}
We consider an RMABP with $K$ ongoing projects or bandits. At any moment in time, bandit~$k$, $k=1,...,K$, is in a certain state~$n_k \in \mathcal{N}^d$, with $d\in \mathcal{N}^+$.   Decision epochs are defined as the moments when one of the bandits changes its state. At each decision epoch, the controller decides for each bandit to either make the bandit passive,  action $a=0$, or to make the bandit active, action $a=1$.  

Throughout this paper, we consider bandits that are modeled as a continuous-time Markov chain, that is, when bandit $k$ is in state $n_k$, it changes the state after an exponentially distributed amount of time. Transition rates  for bandit~$k$, which can either be finite or infinite, depend only on the bandits' state~$n_k$ and the action chosen for this bandit. 
Let ${\mathcal{I}_k(n_k,a)}$ be an indicator function for the event of an infinite (impulse) transition  for bandit~$k$  at state $n_k$ under action $a$, i.e., 
 \[
{\mathcal{I}_k(n_k,a)} := 
  \begin{cases}
1 \text{ if  state changes instantaneously},\\
    0 \text{ otherwise}. \\     
  \end{cases}
\]
If ${\mathcal{I}_k(n_k,a)} =1$, let $p_k^a(n_k,m_k)$ be the probability of making an immediate transition  to state~$m_k$ from state $n_k$. 
If ${\mathcal{I}_k(n_k,a)} =0$,  let $q_k^a(n_k, m_k)$ be the   finite transition rate of going from state $n_k$ to $m_k$  under action $a\in \{0,1\}$. Note that  the state  of  a bandit  can evolve both when being active and passive, and this does not depend on the states/actions of other bandits. Hence, given the action taken, the dynamics of each bandit is independent of the others.


A policy $\phi$ decides which bandit is made active. Because of the Markovian property, we  focus on   policies which base their decision only on the current states of the bandits. For policy~$\phi$, $N_k^\phi(t)$ denotes the state of bandit $k$ at time $t$, and $\vec N^\phi(t)=(N_1^\phi(t),\ldots, N_K^\phi(t))$. 
Let $S_k^\phi(\vec{N}^\phi(t)) \in \{0,1\}$ represent whether or not  bandit $k$ is made active at time $t$, that is, it equals 1 if the bandit is activated, and 0 otherwise.

For bandit~$k$, let $f_k(n_k,a)$ be a function of state $n_k$ and action $a$ and assume it is bounded by a polynomial in $n_k$. Further assume that if  $\mathcal{I}_k(n_k,a)=1$, then $f_k(n_k,a)=0$. 
A policy~$\phi$ is called feasible if it satisfies the following constraint: 
\begin{equation}\label{main_const}
\sum_{k=1}^Kf_k(N_k^\phi(t), S_k^\phi(\vec{N}(t)))   \le M,
\end{equation}
where $M$ is some given constant.
That is,~\eqref{main_const} gives  a constraint on the number of activated/passive bandits resulting  in finite transition rates. We denote by $\mathcal{U}$ the set of policies that satisfy the constraint~(\ref{main_const}) and make the system ergodic. We assume that such a policy exists, hence $\mathcal{U}$ is non-empty.  

Since $f_k(n_k,a)=0$ if $\mathcal{I}_k(n_k,a)=1$, it is direct that   there is no  constraint on the number of impulses at a given time~$t$ (we refer to Remark~\ref{rem} for a   discussion on the control of impulses).
For non-impulsive control,  setting $f_k(N_k^\phi, S_k^\phi(\vec{N})) = S_k^\phi(\vec{N})$, the constraint (\ref{main_const}) reduces to $\sum\limits_{k=1}^K S_k^\phi(\vec{N}) \le M$, that is, at most $M$ out of $K$ bandits can be made active. This is a slight variation  of the   classical restless bandit problem, where exactly $M$ bandits need to be  activated at any time~$t$\footnote{This can be shown by introducing so-called dummy bandits with zero cost and fixed state, see~\cite{verloop2016asymptotically}.}.
Another interesting function is when $f_k(N_k^\phi, S_k^\phi(\vec{N}))$ represents the expected capacity occupation (volume), in which case the constraint (\ref{main_const}) reduces to the family of sample-path knapsack capacity allocation constraints as recently explored in \cite{jacko2016resource}, \cite{graczova2014generalized}.

Let $C_k(n_k,a)$ denote the cost per unit of time when bandit $k$ is in state $n_k$ and is either passive ($a=0$) or active ($a=1$). Let $L_k^\infty(n_k, m_k, a)$ be the lump cost  when bandit $k$ under action~$a$ immediately changes state from $n_k$ to $m_k$. We assume that both $C_k(n_k,a)$ and $L_k^\infty(n_k, m_k, a)$ can be bounded by a polynomial in~$n_k$ and $m_k$.
Instead of a lump cost $L_k^\infty(n_k, m_k, a)$, we will work with {$C_k^{\infty, \phi}(\vec{n}, a)$, which is defined as the cost per unit of time due to impulse transitions when policy $\phi$ is implemented, we are in state ${n}$ and action $a$ is taken. This is given by
\begin{eqnarray*}
C_k^{\infty, \phi}(\vec{n},a) := &&\sum_{\tilde{n}_k}\sum_{m_k}\left( q_k^{a}(n_k, \tilde{n}_k) \times \mathcal{I}_{k}(\tilde{n}_k, S_k^\phi(\vec{M}_{k}(\vec{n}, \tilde{n}_k)))\right.
\\
&&\left.\times{ p_k^{S_k^\phi(\vec{M}_{k}(\vec{n}, \tilde{n}_k))}(\tilde{n}_k,m_k) \times  L_k^\infty(\tilde{n}_k, m_k, S_k^\phi(\vec{M}_{k}(\vec{n}, \tilde{n}_k)) }\right),
\end{eqnarray*}
where $\vec{M}_{k}(\vec{n}, \tilde{n}_k)$ equals the state $\vec n$ in which the $k$th component of $\vec{n}$ is replaced by $\tilde{n}_k$.
}


 This can be seen as follows. The cost per unit of time due to impulse transitions 
consists of the transition rate at which you go to an impulse situation, times its corresponding lump cost. Regarding the first, we have such a transition for some bandit~$k$, which has a transition to some state~$\tilde{n}_k$, hence the new state is $\vec{M}_{k}(\vec{n}, \tilde{n}_k)$, in which it experiences an impulse. Hence, it concerns a transition rate {$q_k^{a}(n_k, \tilde{n}_k)$} if and only if $\mathcal{I}_{k}(\tilde{n}_k, S_k^\phi(\vec{M}_{k}(\vec{n}, \tilde{n}_k)))=1$. Now, regarding the corresponding lump cost, this cost depends on the state~$m_k$ where bandit~$k$ ends up after the impulse. That is, we need to multiply probability $p_k^{S_k^\phi(\vec{M}_{k}(\vec{n}, \tilde{n}_k))}(\tilde{n}_k,m_k)$ with the corresponding lump cost $L_k^\infty(\tilde{n}_k, m_k, S_k^\phi(\vec{M}_{k}(\vec{n}, \tilde{n}_k))$.

The objective is to find a scheduling policy $\phi \in \mathcal{U}$ that minimizes the long-run average cost: 
\begin{equation}\label{objective}
C^\phi :=\limsup_{T\rightarrow\infty}\sum_{k=1}^K\frac{1}{T}\mathbb{E}\left( \int_{0}^T C_k(N_k^\phi(t), S_k^\phi(\vec{N}^\phi(t)))+ C_k^{\infty,\phi}(\vec{N}^\phi(t), S_k^\phi(\vec{N}^\phi(t))) dt \right).
\end{equation}
 The first term is a contribution from holding cost per unit of time and the second term corresponds to the lump cost due to impulses.

\begin{rmk}
\label{rem}
There is no    sample-path constraint on the impulse control, that is, on the number of impulses allowed at each moment in time, as this will be by default satisfied for any policy (see (\ref{main_const})).
Instead, in order to control the number of impulses in the system, one can set the lump cost for infinite transitions, $L_k^\infty(n,m,a)$, appropriately. Another method would be to include a constraint on the \emph{time-average} number of impulses. 
The later is beyond the scope of the paper, as this would imply deriving an index-based heuristic that satisfies a time-average constraint,  and, to the best of our knowledge,  no index heuristics have been proposed for such  settings.  
\end{rmk}

\section{Lagrangian Relaxation and Whittle's index policy}\label{relaxation}

Finding a policy that minimizes the long run average cost~(\ref{objective}) under constraint~(\ref{main_const}) is  intractable in general. In fact, it is shown in \cite{papadimitriou1999complexity} that the restless bandit problems are 
PSPACE complete, which is much stronger evidence of intractability than NP-hardness.  
Following~\cite{whittle1988restless}, a very fruitful approach has been to study the relaxed problem in which the constraint~\eqref{main_const} is replaced by its time-averaged version, that is,
\begin{equation}\label{average_const}
\limsup_{T\rightarrow\infty}\frac{1}{T}\mathbb{E}\left( \int_{0}^T\sum_{k=1}^K f_k(N_k^\phi(t), S_k^\phi(\vec{N}^\phi(t)))dt \right)\le M.
\end{equation}
Let $\mathcal{U}^{REL}$ be the set of stationary policies $\phi$ that satisfy (\ref{average_const}) and for which the Markov chain is ergodic. Note that the set of policies that make the relaxed problem ergodic includes $\mathcal{U}$, i.e., the
set of ergodic policies for the original problem. The objective of the relaxed problem is hence to determine a policy that solves (\ref{objective}) under constraint (\ref{average_const}). An optimal policy for the relaxed
problem then serves as a heuristic for the original optimization
problem.
 
 Using the Lagrangian approach, we write the relaxed problem as the following unconstrained problem: find a policy $\phi$ that minimizes $\mathcal{C}^\phi(W) :=${\small
\begin{equation}\label{relaxation} 
\limsup_{T\rightarrow\infty}\frac{1}{T}\mathbb{E}\left( \int_{0}^T \left(\sum_{k=1}^KC_k(N_k^\phi(t), S_k^\phi(\vec{N}^\phi(t)))\right.\right. + C_k^{\infty, \phi}(\vec{N}^\phi(t), S_k^\phi(\vec{N}^\phi(t)))
\left.
-  W\left(\sum_{k=1}^Kf_k(N_k^\phi(t), S_k^\phi(\vec{N}^\phi(t))) -M )\right)dt \right),
\end{equation}}
where $W$ is the Lagrange multiplier. The latter can be decomposed into $K$ subproblems, one for each bandit $k$, that is, minimize  {\small
\begin{equation}\label{subproblem_k}
\mathcal{C}_k^\phi:= \limsup_{T\rightarrow\infty}\frac{1}{T}\mathbb{E}\left( \int_{0}^T \left(C_k(N_k^\phi(t), S_k^\phi({N}_k^\phi(t))) + C_k^{\infty, \phi}(N_k^\phi(t), S_k^\phi({N}_k^\phi(t))) \left. 
 - W f_k(N_k^\phi(t), S_k^\phi({N}_k^\phi(t)) )\right)dt \right. \right).
\end{equation}}
With slight abuse of notation, we  dropped the vector notations since we analyze a single bandit in the decomposed problem. The solution to (\ref{relaxation}) is obtained by combining the solution to the $K$ separate optimization problems (\ref{subproblem_k}). Under a stationarity assumption, we can invoke ergodicity to show that (\ref{subproblem_k}) is equivalent
to minimizing
\begin{equation}\label{ergodic_subproblem_k}
\mathbb{E}(C_k(N_k^\phi, S_k^\phi(N_k^\phi)))~+\mathbb{E}(C_k^{\infty, \phi}(N_k^\phi, S_k^\phi(N_k^\phi)))  - W \mathbb{E}(f_k(N_k^\phi, S_k^\phi(N_k^\phi))),
\end{equation}

where $N_k^\phi$ is distributed as the stationary distribution of the state of bandit $k$ under policy $\phi$. If $f_k(N_k^\phi, S_k^\phi({N}_k^\phi)) =1- S_k^\phi({N_k^\phi})$, the Lagrange multiplier $W$ can be interpreted as subsidy for passivity. 
 For  ease of notation, we denote the total expected cost of bandit $k$ under policy $\phi$ by
$$\mathbb{E}(T_k(N_k^\phi, S_k^\phi(N_k^\phi))) := \mathbb{E}(C_k(N_k^\phi, S_k^\phi(N_k^\phi)))~+\mathbb{E}(C_k^{\infty, \phi}(N_k^\phi, S_k^\phi(N_k^\phi))). $$

\subsection{Indexability and Whittle's Index}

Indexability is a property that allows us to develop a heuristic for the original problem. This property imposes that as the Lagrange multiplier, $W$, increases, the collection of states in which the optimal action is passive increases. 

\begin{defn}
A bandit is indexable if the set of states in which passive is an optimal action in~(\ref{ergodic_subproblem_k}) (denoted by $D_k(W)$) increases in $W$, that is, $W' <W \Rightarrow D_k (W') \subseteq D_k(W )$.
\end{defn}
Note that in case the set of states $D_k(W)$ is decreasing in $W$, one can simply switch the role of active and passive for bandit~$k$, and   the bandit would be  indexable. If the RMABP is  indexable, Whittle's index in state $N_k$ is defined as follows:
\begin{defn}
When a bandit is indexable, Whittle's index in state $n_k$ is defined as the smallest value for the subsidy such that an optimal policy for (\ref{ergodic_subproblem_k}) is indifferent of the action in state $n_k$.  Whittle's index is denoted by $W_k(n_k)$.
\end{defn}

Given that the indexability property holds, \cite{whittle1988restless} established that the solution to the
relaxed control problem (\ref{relaxation}) will be to activate all bandits that are in a state $n_k$ such that their
Whittle's index exceeds the Lagrange multiplier, i.e., $W_k (n_k) > W$. 
This optimal solution to the relaxed problem might be infeasible for the original model where the constraint (\ref{main_const}) has to be satisfied at each decision epoch. Hence, we have the following index-based heuristic for the original problem with the general resource constraint (\ref{main_const}). 
\begin{defn}[Index-based policy]\label{index_heuristic} Assume at time $t$ we are in state $\vec{N}(t) = \vec{n}$. The index-based heuristic will activate bandits in a greedy
manner: activate the bandit with the highest non-negative Whittle's index, $W_k(n_k)$,  until either the constraint is met or there are no more bandits with non-negative index to activate. 
%
%
\end{defn}

In our definition of the index-based policy, bandits with a negative Whittle's index are not activated.  This is a direct consequence of the relaxed optimization problem: when the Whittle's index is negative in state ${m}$, i.e., $W_k ({m}) < 0$,  bandit~$k$ in state $m$ is made active only if $W < W_k ({m})$. Since $W_k ({m}) < 0$, the bandit is activated only if a cost $W$ is paid for being passive.
In case $f_k(N_k^\phi, S_k^\phi(\vec{N}_k^\phi)) =S_k^\phi({N_k^\phi})$, this definition reduces to the Whittle's index policy as studied in classical RMABPs. 
In that setting, the sample-path constraint~\eqref{main_const} can also have a strict equality sign, in which case one can simply adapt the index-based policy by  activating the $M$ bandits  with the highest indices, even though these indices can be negative.  
We further note that in case the function $f_k(\cdot, \cdot)$ represents the expected capacity occupation,  the index-based rule as defined above has been numerically shown to perform well  in \cite{graczova2014generalized}. 

 \begin{rmk}
 \label{single_bandit_optimality}
In the case of a single bandit, i.e., $K=1$,  with no sample-path constraint on activation, the index-based  policy provides an \emph{optimal solution} to the original problem.
One can choose a uniformly-bounded constraint function $f(n,a)$ in such a way that the bandit is indexable. When $W=0$, the objectives of the original and the relaxed problem are the same. Hence, the index-based policy (activate the bandit whenever it is in a state~$n$ with $W(n)>0$) results in an optimal solution for the original problem for the single bandit.  
In Section~\ref{infinite_transition_rates} this is discussed for a content delivery network.
\end{rmk}

\subsection{Expression of Whittle's index}
\label{sec:al}

In this section we present an algorithm, which checks for indexability, and, if indexable, calculates  Whittle's index. This algorithm is computationally expensive. In section \ref{sec:th}, we show how this algorithm simplifies in case some structural results can be shown for the relaxed problem.  Since the action set (binary actions) is finite for each state,  we can restrict our focus to deterministic stationary policies, see \cite[Theorem 11.4.8]{puterman2014markov}.

Let $X^U\subset \mathcal{N}^d$ be the set consisting of all states bandit~$k$ can be in. For a given subset $X \subset X^U$, let policy~$X$ refer to the policy that keeps the bandit passive in all states~$x\in X$. We denote the empty set by $\varepsilon$.

Algorithm~\ref{alg} can be explained as follows. In the relaxed subproblem, the objective is to find a policy for bandit~$k$ that minimizes~\eqref{ergodic_subproblem_k}.  Note that~\eqref{ergodic_subproblem_k} is a linear function in~$W$.
  Hence, when comparing two policies~$\phi_1$ and $\phi_2$,  defined by their sets of {passive} states $X^{(\phi_1)}$ 
and $X^{(\phi_2)}$, respectively,  there is a value for the subsidy, $W_{1,2}$, for which the costs~\eqref{ergodic_subproblem_k} under both policies are equal. This is given by 
$$W_{1,2}:= \frac{
\mathbb{E}(T_k(N_k^{X^{(\phi_1)}}, S_k^{X^{(\phi_1)}}({N}_k^{X^{(\phi_1)}}))) - \mathbb{E}(T_k(N_k^{X^{(\phi_2)}}, S_k^{(\phi_2)}
({N}_k^{(\phi_2)})))
}
{\mathbb{E}(f_k(N_k^{X^{(\phi_1)}}, S_k^{X^{(\phi_1)}}({N}_k^{X^{(\phi_1)}}))) - \mathbb{E}(f_k(N_k^{(\phi_2)}, S_k^{(\phi_2)}({N}_k^{(\phi_2)})))}.
$$ 
When calculating the indices, one needs to find the policies (and their corresponding passive sets) that minimize~\eqref{ergodic_subproblem_k} for each $W$. As a first step,  one can start by searching for the optimal policy when $W=-\infty$ (the set of passive states corresponding to this optimal policy is denoted by $X_{0}$) and when $W=\infty$ (corresponding policy denoted by $X_{0'}$).  Recursively, one can find the switching point $W$ where the optimal passive set changes. That is, in Step~$j$, given it is known that the policy described by the passive set $X_{j-1}$ is optimal when $W_{j-2}<W\leq W_{j-1}$, find the crossing point  where another policy becomes optimal. Such a crossing point is denoted by $W_j$ and the corresponding optimal passive set by $X_j$. Now, if $X_{j-1}\subset X_j$, for all $j$, this implies indexability and the index for any state in $X_{j}\backslash X_{j-1}$ is given by $W_j$.

\begin{algorithm}                     
 \caption{{Indexability check and Whittle's index computations}}\label{alg}
\begin{algorithmic} [1]                   
\REQUIRE  $X^U, f_k(.), T_k(.)$

\hspace{-0.61cm}\textbf{Initialization:} Define $X_0$ as the union of sets  that equals $\arg\inf\limits_{X \subset {X^U}} \mathbb{E}(f_k(N_k^{X}, S_k^X({N}_k^{X})))$. 
Define $X_{0^{'}}$ as the union of sets that equals  $\arg\sup\limits_{X \subset {X^U}} \mathbb{E}(f_k(N_k^{X}, S_k^X({N}_k^{X})))$. 
 
\hspace{-0.61cm}\textbf{Step $j$:} Compute 
$$W_j =   \inf_{\substack{X \subset X^U }} \frac{\mathbb{E}(T_k(N_k^{X}, S_k^{X}({N}_k^{X}))) - \mathbb{E}(T_k^{X_{j-1}}(N_k^{X_{j-1}}, S_k^{X_{j-1}}({N}_k^{X_{j-1}})))}{\mathbb{E}(f_k(N_k^{X}, S_k^{X}({N}_k^{X}))) - \mathbb{E}(f_k(N_k^{X_{j-1}}, S_k^{X_{j-1}}({N}_k^{X_{j-1}})))}, j\ge 1.$$ 
\begin{description}
\item[\bf Step $j_1$:] If the infimum is not attained in any finite set, go to step $j'$.  Otherwise, let $\mathcal{X}_j$ denote the collection of sets that reaches the infimum. Define  $X_{j}$ as the union of all sets  $arg\max\limits_{Y\in \mathcal{X}_j} \mathbb{E}(f_k(N_k^{Y}, S_k^Y({N}_k^{Y})))$. 
\item[\bf Step $j_2$:] If $X_{j-1}\subset X_{j}$, set  $W_k (x) :=W_j$ for all states $x \in X_j \backslash X_{j-1}$. Else, the  system is not indexable and stop. 
\item[\bf Step $j_3$:] If $X_j = X^U$, then the system is indexable and stop. Otherwise go to step $j+1$.
\end{description}
\hspace{-0.61cm}\textbf{Step $j{^{'}}$:} Compute 
$$W_{j^{'}} = {\sup_{\substack{X \subset X^U } }
}
 \frac{\mathbb{E}(T_k(N_k^{X}, S_k^{X}({N}_k^{X}))) - \mathbb{E}(T_k^{X_{j^{'}-1}}(N_k^{X_{j^{'}-1}}, S_k^{X_{j^{'}-1}}({N}_k^{X_{j^{'}-1}})))}{\mathbb{E}(f_k(N_k^{X}, S_k^{X}({N}_k^{X}))) - \mathbb{E}(f_k(N_k^{X_{j^{'}-1}}, S_k^{X_{j^{'}-1}}({N}_k^{X_{j^{'}-1}})))}, j\ge 1.$$ 
where the supremum is taken over all those $X \subset X^U$ such that the denominator is non-zero. 
\begin{description}
\item[\bf Step $j_1'$:] If the supremum is not attained in any finite set, Whittle's index is not found. Otherwise, let $\mathcal{X}_{j'}$ denote the collection of sets that reaches the infimum. Define $X_{j^{'}}$ as the union of all sets $arg\min\limits_{Y\in \mathcal{X}_{j'}} \mathbb{E}(f_k(N_k^{Y}, S_k^Y({N}_k^{Y})))$.

\item[\bf Step $j_2'$:] If $X_{j^{'}-1}\subset X_{j^{'}}$, set $W_k (x) :=W_{j^{'}}$ for all states $x \in X_{j^{'}} \backslash X_{j^{'}-1}$, Else, the system is not indexable and stop.

\item[\bf Step $j_3'$:]  If $X_{j^{'}} = X^U$ then the system is indexable, and stop. Otherwise go to step $j'+1$.
\end{description}

%

\end{algorithmic}
\end{algorithm}

\begin{rmk}
\label{rem: alg}
In  Algorithm~\ref{alg}, an indexability check is included. 
 Note that in case indexability had been proved independently, the algorithm would simplify, as one can   replace in step~$j$ the  ``$\inf_{X\subset X_U}$'' by  ``$\inf_{X: X_{j-1}\subset X \subset X_U}$'', and similarly in step~$j'$.
\\
In addition, in Algorithm~\ref{alg}  structural properties of the optimal solution of the relaxed optimization problem are not taken into account. In Section~\ref{sec:th} we will describe how a structural property {(threshold optimality)} can help in the calculation of the index. \end{rmk}

\begin{rmk} {Since we consider the average cost criterion, the calculations in step $j$ and $j'$ in Algorithm \ref{alg} are feasible provided the steady-state distributions are known. If instead we had considered the total discounted cost criterion, the latter would depend on the initial state and transient behavior, and hence the calculation as done in Algorithm \ref{alg} is not possible. 
}
\end{rmk}

\section{Threshold policies}
\label{sec:th}
For certain one-dimensional problems, it can be established that the structure of the optimal solution of problem (\ref{ergodic_subproblem_k}) is of threshold type. That is, there is a threshold function $n_k(W)$ such that when bandit $k$ is in a state $m_k \leq n_k (W)$, then action $a$ is optimal, and otherwise action $a'$ is
optimal, $a,~ a' \in \{0, 1\}$ and $a \neq a'$. We let policy $\phi = n$ denote a threshold policy with threshold $n$, and we
refer to it as 0-1 type if $a = 0$ and $a' = 1$, and 1-0 type if $a = 1$ and $a' = 0$. 
In general it can be hard to verify whether an optimal solution is of threshold type.  
In this section we provide sufficient conditions for threshold optimality and show how Algorithm \ref{alg} simplifies.

The following result characterizes sufficient conditions on the transition rates   $q_k^a(\cdot,\cdot)$ and the jump probabilities $p_k^a(\cdot,\cdot)$ such that a  threshold policy solves problem (\ref{ergodic_subproblem_k}).  
For example, condition $(i)$ can be interpreted as follows. If there are no upward jumps under the active action or under an impulse control, and there can be an upward jump of at most one under the passive action, then 0-1 type of threshold policies are optimal. 
The proof can be found in the Appendix. 

\begin{prop}\label{threshold_optimality}
Assume $N_k(t)\in \mathcal{N}$ If one of the following conditions holds, 
\begin{enumerate}[label=(\roman*)]
\item $q_k^1(N, N+i) = 0,~\forall~i\ge 1$,  $ \ q_k^0(N, N+i) = 0,~\forall~i\ge 2$, and   $p_k^a(N, N+i ) =0,~\forall~i\ge 1, a =0, 1,$
\item $q_k^0(N, N-i) = 0,~\forall~i\ge 1$,  $ \ q_k^1(N, N-i) = 0,~\forall~i\ge 2$, and   $p_k^a(N, N-i ) =0,~\forall~i\ge 1, a =0, 1,$
\end{enumerate} 
 then there exists an $n_k \in \{-1, 0, 1, ...\}$ such that a 0-1 type of threshold policy with threshold $n_k$, optimally solves problem (\ref{ergodic_subproblem_k}). 
 
 Alternatively, if one of the following conditions holds, 

\begin{enumerate}
\item[(iii)]  $q_k^1(N, N-i) = 0,~\forall~i\ge 1$, $ \ q_k^0(N, N-i) = 0,~\forall~i \ge 2$ and  $p_k^a(N, N-i) =0,~\forall~i\ge 1, a =0, 1,$
\item[(iv)] $q_k^0(N, N+i) = 0,~\forall~i\ge 1$,  $ \ q_k^1(N, N+i) = 0,~\forall~i\ge 2$, and   $p_k^a(N, N+i) =0,~\forall~i\ge 1, a =0, 1,$
\end{enumerate}
then, there exists an $n_k \in \{-1, 0, 1, ...\}$ such that a 1-0 type of threshold policy with threshold $n_k$, optimally solves problem (\ref{ergodic_subproblem_k}).  
\end{prop}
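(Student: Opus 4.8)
The plan is to prove threshold optimality via the standard value-iteration (or policy-iteration) argument for average-cost MDPs, establishing that the optimal value function $h_k(\cdot)$ of the relaxed single-bandit problem~\eqref{ergodic_subproblem_k} is concave (or convex, depending on the case), and then using the structure of the transition kernel to deduce that the set of states where action $1$ beats action $0$ is an up-set (or down-set), i.e.\ of threshold form. I will focus on case $(i)$; cases $(ii)$--$(iv)$ follow by the obvious symmetries (reflecting the state space $N\mapsto -N$, or swapping the labels of the two actions).

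First I would set up the discrete-time discounted MDP obtained from the continuous-time bandit by uniformization: since the relevant transition rates are assumed finite (impulses are handled separately by the cost $C_k^{\infty}$ and the assumption $f_k=0$, $C_k=0$ there, so they can be folded into the one-step cost/transition of the uniformized chain), pick a uniformization constant $\Lambda$ dominating all exit rates, and write the discounted value function $h^\beta_k$ as the fixed point of the dynamic programming operator $\mathcal{T}$. The one-step reward in state $N$ under action $a$ is $C_k(N,a)+C_k^{\infty}(N,a)-Wf_k(N,a)$ plus the discounted continuation value $\sum_m P^a(N,m)h^\beta_k(m)$. Then I would argue by induction on the value-iteration sequence $h^{(0)}_k\equiv 0$, $h^{(\ell+1)}_k=\mathcal{T}h^{(\ell)}_k$ that each $h^{(\ell)}_k$ is concave in $N$ (taking $N$ over $\mathcal{N}$, with the usual convention for the boundary at $0$, e.g.\ a reflecting boundary or an absorbing fictitious state $-1$, which is why the threshold is allowed to be $-1$). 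Concavity is preserved under: (a) adding a concave one-step cost — so here one needs $C_k(\cdot,a)+C_k^\infty(\cdot,a)-Wf_k(\cdot,a)$ to be concave; if the paper does not assume this I would state it as a needed hypothesis or note it is implied by the structural assumptions — and (b) the transition operator, which under condition $(i)$ maps concave functions to concave functions because under action $1$ (and under impulses) there are no upward jumps, under action $0$ there is an upward jump of size at most one, so each $P^a$ is a ``one-step'' stochastically-monotone, dispersion-controlled kernel; the key lemma is that $N\mapsto \sum_m P^a(N,m)g(m)$ is concave when $g$ is concave, for such kernels. Finally (c) the pointwise minimum over $a\in\{0,1\}$ of two concave functions is concave — this is the step that uses concavity rather than mere monotonicity.

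Having concavity of $h_k=\lim h^{(\ell)}_k$ (passing $\beta\uparrow 1$ and $\ell\to\infty$ via the standard vanishing-discount / uniformization arguments, invoking the ergodicity assumption in $\mathcal{U}^{REL}$ so that $h^\beta_k-h^\beta_k(0)$ converges to a bias function solving the average-cost optimality equation), the optimal action in state $N$ is $0$ iff
\[
C_k(N,0)+C_k^{\infty}(N,0) + \sum_m P^0(N,m)h_k(m) \le C_k(N,1)+C_k^{\infty}(N,1) - Wf_k(N,1) + \sum_m P^1(N,m)h_k(m),
\]
i.e.\ iff $\Delta(N):=\big[\text{active side}\big]-\big[\text{passive side}\big]\ge 0$. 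The point is that $\Delta(N)$ is the difference of the passive-continuation term (concave, and under condition $(i)$ it is a one-step-up average of $h_k$, which by concavity keeps $\Delta$ monotone) and the active term. I would show $\Delta$ is nondecreasing (or that $\{\Delta\ge 0\}$ is a down-set), using concavity of $h_k$ together with the fact that under action $1$ the chain never goes up while under action $0$ it goes up by at most one: concretely, $\sum_m P^0(N,m)h_k(m)-\sum_m P^1(N,m)h_k(m)$ has monotone increments in $N$ by a coupling/telescoping argument on concave $h_k$. Hence the passive set is $\{N : N\le n_k\}$ for some $n_k\in\{-1,0,1,\dots\}$, which is exactly a $0$-$1$ threshold policy.

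The main obstacle I anticipate is step (b): making precise and correct the claim that the transition operator under conditions $(i)$ preserves concavity, including the behavior at the boundary state $0$ (and the book-keeping of impulse transitions embedded in $C_k^\infty$ and in the effective kernel). The subtlety is that concavity preservation by a stochastic kernel requires more than stochastic monotonicity — one needs a second-order (``convex/concave order'') condition on the rows of $P^a$ — and condition $(i)$ is precisely tailored to give this: no up-jumps under $a=1$ means $P^1$ pushes mass weakly downward, an up-jump of at most one under $a=0$ bounds the ``spread'', and these together are what force $N\mapsto \sum_m P^0(N,m)h(m)-\sum_m P^1(N,m)h(m)$ to inherit monotone differences from the concavity of $h$. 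I would isolate this as a small lemma (``for kernels satisfying $(i)$, $P^a$ maps concave functions to concave functions, and the active-minus-passive gap has monotone increments''), prove it by a direct increment computation, and then the rest of the argument is the routine value-iteration induction plus the vanishing-discount limit. The corresponding statements for $(ii)$--$(iv)$ are obtained by relabeling: $(ii)$ swaps the roles of the actions in the ``at most one up-jump'' condition (still yielding $0$-$1$ thresholds after re-examining which side is concave), and $(iii)$, $(iv)$ are the state-space reflections giving $1$-$0$ thresholds.
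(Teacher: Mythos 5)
Your plan has a genuine gap: the key lemma you isolate in step (b) --- that under condition $(i)$ the transition kernels preserve concavity and the active-minus-passive gap has monotone increments --- is not implied by the hypotheses of the proposition. Conditions $(i)$--$(iv)$ only forbid jumps in one direction; they impose no restriction whatsoever on jumps in the other direction (no birth-and-death structure, no stochastic monotonicity, no convex-order condition on the rows of the kernel). For instance, in the machine repairman application that this proposition is meant to cover, the active action jumps from $n$ to $0$ at an arbitrary state-dependent rate $r_k(n)$ and the passive action jumps to $0$ at rate $\psi_k(n)$; such kernels do not in general map concave functions to concave functions. Likewise, the proposition assumes nothing about the costs beyond polynomial boundedness, so the concavity (or convexity) of $C_k(\cdot,a)+C_k^{\infty}(\cdot,a)-Wf_k(\cdot,a)$ that your induction needs is an extra hypothesis absent from the statement --- you flag this yourself, but it is not a deferrable technicality: without it, and without a second-order condition on the kernels, the value-iteration induction can neither start nor propagate, so your route would only prove a result under strictly stronger assumptions than the proposition makes.

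The paper's own proof avoids value-function structure entirely and is far more elementary. It takes any stationary optimal policy $\phi^*$ for \eqref{ergodic_subproblem_k}, sets $n^*=\min\{m: S^{\phi^*}(m)=1\}$, and observes that under $(i)$ the controlled chain can never climb above $n^*$: below $n^*$ the policy is passive and passive transitions go up by at most one, at $n^*$ the policy is active, and active and impulse transitions never go up. Hence all states above $n^*$ are transient, the stationary distribution of $\phi^*$ is supported on $\{0,\dots,n^*\}$, and on that set $\phi^*$ coincides with a 0-1 threshold policy; the two policies therefore have identical long-run average cost, so the threshold policy is optimal. Note that this only establishes \emph{existence} of an optimal threshold policy (the optimal policy may act arbitrarily on the transient states), which is exactly what is claimed; your approach aims at the stronger conclusion that the optimal decision rule is monotone in the state everywhere, and it is precisely that stronger conclusion which forces the additional convexity/monotonicity hypotheses you would have to import. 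To prove the proposition as stated, the transient-states argument is the right tool.
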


Optimality of a threshold policy  for the relaxed optimization problem has been proved for several RMABPs, several examples can be found in \cite[Section 6.5]{gittins2011multi}. All applications as presented in this paper, fit the sufficient conditions, and hence have the threshold structure. We further note that there are models where the optimality of threshold policies has been established, but whose model parameters do not fall within the conditions of Proposition~\ref{threshold_optimality},  see for example \cite{ansell2003whittle, glazebrook2009index,  larranaga2015efficient}.

When an optimal solution for problem (\ref{ergodic_subproblem_k}) is of threshold type, we have the following sufficient condition for indexability.    For the proof we refer to the Appendix.

\begin{prop}\label{rmk_on_indexability}
If an optimal solution of (\ref{ergodic_subproblem_k}) is of threshold type, and $\mathbb{E}(f_k(N_k^n, S_k^n({N}_k^n)))$ is non-negative and strictly increasing   in $n$, then problem (\ref{ergodic_subproblem_k}) is indexable. 
\end{prop}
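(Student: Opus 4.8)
The plan is to exploit the linearity of the objective in $W$ together with the threshold structure. Fix a bandit $k$ and drop the index. By assumption, for each $W$ there is an optimal threshold policy $\phi = n(W)$; write the cost of the threshold-$n$ policy as $\mathcal{C}(n,W) = \mathbb{E}(T_k(N^n,S^n)) - W\,\mathbb{E}(f_k(N^n,S^n)) =: g(n) - W\,h(n)$, where I have set $g(n) := \mathbb{E}(T_k(N^n_k,S^n_k(N^n_k)))$ and $h(n) := \mathbb{E}(f_k(N^n_k,S^n_k(N^n_k)))$. Indexability asks that the set of states where passivity is optimal grows with $W$; since optimal policies are of threshold type and (say, for a $0$--$1$ type threshold) the passive set under threshold $n$ is $\{0,1,\dots,n\}$, this is equivalent to showing the optimal threshold $n(W)$ is monotone in $W$.

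The key step is a single-crossing / exchange argument. First I would note that the optimal threshold need only be compared among neighbouring thresholds: for each $n$, let $W_n$ be the indifference value solving $g(n) - W\,h(n) = g(n+1) - W\,h(n+1)$, i.e. $W_n = \dfrac{g(n+1)-g(n)}{h(n+1)-h(n)}$; this is well-defined because $h$ is strictly increasing by hypothesis, so the denominator is nonzero. Threshold $n{+}1$ is weakly preferred to threshold $n$ precisely when $W \ge W_n$ (using $h(n+1) > h(n)$ to fix the direction of the inequality after dividing). So the plan is to show the sequence $(W_n)_n$ is nondecreasing: once $W$ exceeds $W_n$ we never want to go back below threshold $n$. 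If $(W_n)$ is monotone, then as $W$ increases the optimal threshold $n(W) = \max\{n : W \ge W_{n-1}\}$ is nondecreasing, hence $D_k(W)$ is increasing, giving indexability; moreover $W_n$ is exactly Whittle's index of the state entering the passive set at that crossing.

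The main obstacle — and the place the argument has to do real work — is establishing that $(W_n)$ is nondecreasing, i.e. a discrete convexity/concavity property of the maps $g$ and $h$ along the threshold family. This does not follow from $h$ being increasing alone; it needs that the optimal policy is genuinely of threshold type for \emph{every} $W$ (not just some $W$), which rules out the optimal threshold jumping non-monotonically. The clean way is: suppose for contradiction $n(W)$ is not monotone, so there are $W' < W$ with optimal thresholds $n' = n(W')$ and $n = n(W)$ satisfying $n < n'$. Optimality at $W'$ gives $g(n') - W'h(n') \le g(n) - W'h(n)$, and optimality at $W$ gives $g(n) - W h(n) \le g(n') - W h(n')$. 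Adding these two inequalities yields $(W - W')\bigl(h(n') - h(n)\bigr) \le 0$. But $W > W'$ and, since $n < n'$ and $h$ is strictly increasing, $h(n') - h(n) > 0$, a contradiction. Hence $n(W)$ is monotone and the bandit is indexable. I would then remark that non-negativity of $h$ is what guarantees the relevant Lagrangian comparison is with the "all-active" baseline and that the resulting indices behave as claimed in Definition of Whittle's index; and that the strictness of the monotonicity of $h$ is exactly what is needed to avoid a zero denominator in $W_n$ and to make the contradiction strict.
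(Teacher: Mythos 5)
Your proof is correct, but it proceeds differently from the paper's. The paper views the optimal value $g(W)=\min_n\bigl(\mathbb{E}(T_k(N^n,S^n(N^n)))-W\,\mathbb{E}(f_k(N^n,S^n(N^n)))\bigr)$ as a lower envelope of affine functions of $W$, hence concave with right derivative $-\mathbb{E}(f_k(N^{n(W)},S^{n(W)}(N^{n(W)})))$; concavity makes this derivative non-increasing, so $\mathbb{E}(f_k(N^{n(W)},\cdot))$ is non-decreasing in $W$, and strict monotonicity of $\mathbb{E}(f_k(N^{n},\cdot))$ in $n$ then forces $n(W)$ to be non-decreasing, giving indexability (here non-negativity of $\mathbb{E}(f_k)$ is only used to note $g$ is non-increasing). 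You instead run a direct two-point exchange (monotone comparative statics) argument: adding the two optimality inequalities at $W'<W$ yields $(W-W')\bigl(h(n(W'))-h(n(W))\bigr)\le 0$, which with $h$ strictly increasing rules out $n(W)<n(W')$. The two are essentially the elementary and the geometric version of the same monotonicity fact; your route is more self-contained and does not even use non-negativity of $h$, while the paper's envelope picture connects naturally to the crossing-point structure exploited later in Algorithm~\ref{alg} and Proposition~\ref{proposition:monotone_index}. Two small caveats: your preliminary framing about needing the sequence $(W_n)$ of neighbouring indifference points to be non-decreasing is a red herring (your own exchange argument shows indexability does not require it, and the claim that each $W_n$ is automatically Whittle's index is exactly what Proposition~\ref{proposition:monotone_index} needs the extra monotonicity hypothesis for); and, like the paper, you treat only the 0--1 threshold orientation explicitly and implicitly assume the minimizing threshold is attained for each $W$, with the 1--0 case handled symmetrically after swapping the roles of active and passive.
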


Algorithm \ref{alg} simplifies in case it is known beforehand that \emph{(i)} threshold policies are optimal for the relaxed problem, and \emph{(ii)} the bandit is indexable. For example, assume it is known that 0-1 type of threshold policies are optimal. From Remark~\ref{rem: alg}, one can restrict in Step~$j$ and Step~$j'$ the search to sets $X$ of the form $\{m\leq n\}$  where $n>n_{j-1}$, and initialization step reduces to
finding $n_{0} = \arg\inf\limits_{n \in \mathbb{N}} \mathbb{E}(f_k(N_k^{n}, S_k^n({N}_k^{n})))$ and $X_0=\{m\leq n_0\}$.  Similar for $X_{0'}$.

Moreover, in case the monotonic nature of the function  
\begin{equation}\label{general_index_value}
\frac{\mathbb{E}(T_k^n(N_k^n, S_k^n({N}_k^n))) - \mathbb{E}(T_k^{n-1}(N_k^{{n-1}}, S_k^{{n-1}}({N}_k^{{n-1}})))}{\mathbb{E}(f_k(N_k^{n}, S_k^n({N}_k^{n}))) - \mathbb{E}(f_k(N_k^{{n-1}}, S_k^{{n-1}}({N}_k^{{n-1}})))}
\end{equation}
can be proven, the computation of Whittle's index further  simplifies:

\begin{prop}\label{proposition:monotone_index}
Assume an optimal solution of (\ref{ergodic_subproblem_k}) is of threshold type and bandit~$k$ is indexable.  If~\eqref{general_index_value} is a monotone function in $n$, then  Whittle's index $W_k (n)$ is given by (\ref{general_index_value}).

{
Further, it follows that Whittle's index is non-decreasing if 0-1 type of threshold policies are optimal and is non-increasing if 1-0 type of threshold policies are optimal. }
\end{prop}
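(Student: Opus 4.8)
The plan is to build on Algorithm~\ref{alg} as it has already been simplified in the preceding discussion under the standing assumptions that (i) threshold policies are optimal and (ii) the bandit is indexable. Concretely, I would first recall that under these assumptions the search in Step~$j$ (say for the 0-1 case) can be restricted to threshold sets $X=\{m\le n\}$ with $n>n_{j-1}$, so that
$$W_j = \inf_{n>n_{j-1}} \frac{\mathbb{E}(T_k^n(N_k^n, S_k^n({N}_k^n))) - \mathbb{E}(T_k^{n_{j-1}}(N_k^{n_{j-1}}, S_k^{n_{j-1}}({N}_k^{n_{j-1}})))}{\mathbb{E}(f_k(N_k^{n}, S_k^n({N}_k^{n}))) - \mathbb{E}(f_k(N_k^{n_{j-1}}, S_k^{n_{j-1}}({N}_k^{n_{j-1}})))},$$
and the index of every state in $\{m\le n_j\}\setminus\{m\le n_{j-1}\}$ equals $W_j$. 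Write $g(n):=\mathbb{E}(T_k^n(N_k^n,S_k^n(N_k^n)))$ and $h(n):=\mathbb{E}(f_k(N_k^n,S_k^n(N_k^n)))$; by Proposition~\ref{rmk_on_indexability} (which is what makes the bandit indexable here) $h$ is strictly increasing, so all denominators $h(n)-h(n_{j-1})$ with $n>n_{j-1}$ are strictly positive and the quotients are well defined.

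The core of the argument is a standard "slope/convexity" lemma for the lower convex envelope of the points $\{(h(n),g(n))\}$. Define $r(n):=\frac{g(n)-g(n-1)}{h(n)-h(n-1)}$, which is exactly~\eqref{general_index_value}. The key claim is: if $r$ is monotone (nondecreasing, in the 0-1 case) in $n$, then for any fixed $m$ and any $n>m$,
$$\frac{g(n)-g(m)}{h(n)-h(m)} \ge r(m+1),$$
with equality attained at $n=m+1$; hence $\inf_{n>m}\frac{g(n)-g(m)}{h(n)-h(m)} = r(m+1)$ and the infimum is attained at the consecutive threshold $n=m+1$. This is because $g(n)-g(m)=\sum_{i=m+1}^{n}\big(h(i)-h(i-1)\big)r(i)$ is a weighted average of $r(m+1),\dots,r(n)$ with positive weights $h(i)-h(i-1)$, and by monotonicity each such average is $\ge r(m+1)$. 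I would prove this inequality directly by this telescoping/weighted-average manipulation — no calculus needed. Applying it inductively: Step~$0$ gives $n_0$ as the $\arg\inf$ of $h$, then $n_{-1}:=n_0$ plays the role of $m$; Step~$1$ has its infimum attained at $n_1=n_0+1$ with $W_1=r(n_0+1)$; Step~$2$ at $n_2=n_0+2$ with $W_2=r(n_0+2)$; and in general $W_j=r(n_0+j)$, so for every state $n>n_0$ one gets $W_k(n)=r(n)$, which is precisely~\eqref{general_index_value}. (For $n\le n_0$ the function $h$ is minimized, so these states are always active and~\eqref{general_index_value} should be read via the convention/degenerate case; I would remark that the statement concerns the non-trivial states, matching the simplified initialization already described.) The 1-0 case is symmetric: $r$ nonincreasing, $\sup$ in place of $\inf$, and the envelope is the upper one.

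For the final sentence, once $W_k(n)=r(n)=$~\eqref{general_index_value} has been established, monotonicity of Whittle's index is immediate: in the 0-1 case $r$ is by hypothesis nondecreasing, so $W_k$ is nondecreasing; in the 1-0 case $r$ is nonincreasing, so $W_k$ is nonincreasing. I would also note the consistency check that this monotone direction is exactly what is needed for the recursive passive sets $X_j=\{m\le n_j\}$ to be increasing (as required by indexability in Algorithm~\ref{alg}), which is automatic here since $n_j=n_0+j$ is increasing.

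The main obstacle I anticipate is not the convexity lemma itself — that is routine — but the bookkeeping around the boundary/degenerate situations: making sure the denominators never vanish (handled by strict monotonicity of $h$ from Proposition~\ref{rmk_on_indexability}), dealing with the possibility that some $\arg\inf$/$\arg\sup$ is attained only in the limit $n\to\infty$ (so the recursion "stops" in the sense of Step~$j_3$/$j_3'$ never terminating but still assigning $W_k(n)=r(n)$ to every finite state), and confirming that the tie-breaking rule in Step~$j_1$ (take the union, i.e. the largest minimizing set) is consistent with "the infimum is attained at the consecutive threshold $m+1$" — under strict monotonicity of $r$ it is attained uniquely at $m+1$, and under mere monotonicity one should argue that taking the largest such threshold still yields $W_k(n)=r(n)$ for the states swept in that step. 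These are the points I would treat carefully; the algebra is a one-line telescoping identity.
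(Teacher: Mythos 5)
Your route---restricting Algorithm~\ref{alg} to threshold sets and showing, via the telescoping weighted-average identity, that the infimum in each step is attained at the consecutive threshold---is genuinely different from the paper's proof, which never goes through the algorithm: the paper works directly with the crossing point $\tilde W(n)$ of the costs of threshold policies $n$ and $n-1$, and uses indexability to show $D(W)\subseteq\{m\le n-1\}$ for all $W\le \tilde W(n)$ and $D(W)\supseteq\{m\le n-1\}$ for all $W\ge \tilde W(n-1)$, so that threshold $n-1$ is optimal on $[\tilde W(n-1),\tilde W(n)]$ and $\tilde W(n)$ is, by the definition of the index, exactly $W_k(n)$. The genuine gap in your argument is that you take the \emph{direction} of monotonicity as a hypothesis (``$r$ nondecreasing, in the 0-1 case''), whereas Proposition~\ref{proposition:monotone_index} only assumes that \eqref{general_index_value} is monotone; that the direction aligns with the threshold type is part of the \emph{conclusion}. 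The paper derives it by contradiction: a non-increasing $\tilde W$ is incompatible with indexability plus 0-1 threshold optimality, hence $\tilde W$ must be non-decreasing. Without that step your weighted-average lemma does not apply in the non-increasing case (the infimum would then be approached at large $n$, not at $m+1$, and the per-state assignment $W_k(n)=r(n)$ would fail), and your proof of the proposition's second sentence---``nondecreasing by hypothesis''---is circular.

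Two smaller points. First, your argument is only as strong as the claim that Algorithm~\ref{alg}, restricted to threshold sets, outputs Whittle's indices; the paper states this simplification informally, but its proof of the proposition is self-contained from the definition of the index, so if you go through the algorithm you should also justify that the $W_j$ you produce are the indifference/switching points required by the definition (essentially the $D(W)$-inclusion argument above). Second, strict monotonicity of $\mathbb{E}(f_k(N_k^n,S_k^n(N_k^n)))$ is a hypothesis of Proposition~\ref{rmk_on_indexability}, not a consequence of indexability, so you should state it as an additional assumption (it is what makes your weights $h(i)-h(i-1)$ positive and the denominators in \eqref{general_index_value} nonzero) rather than attribute it to the indexability assumption of the proposition.
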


To illustrate our unifying framework, in the next two sections, we briefly present a few specific applications when the evolution of the bandit is driven by a Markov chain with finite and infinite transition rates. {Full details are given in the Appendices.}





\section{Applications: finite transition rates}\label{finite_transitions_examples}
\label{sec:finite}
There exists a large set of papers that calculate the Whittle's index for RMABPs with finite transition rates and time-average performance objective, see Section \ref{sec:relatedwork} for further references. 
In Section~\ref{sec:machine_repairman}, we consider one such application (machine repairman problem) to illustrate the applicability of our results. 
We however emphasize that all indices as derived in previous literature, can be derived directly from our unifying framework.

\subsection{Machine repairman problem}\label{sec:machine_repairman}
We consider the classical machine repairman problem with $\mathcal{M}$ non-identical machines and $\mathcal{R}$ repairmen, where $\mathcal{R}\le \mathcal{M} $. Any number of repairmen may be active at any decision epoch, but no more than one may work on any individual machine at a time. 

\cite{glazebrook2005index} modeled the machine repairman problem as an RMABP in \emph{discrete time} and obtained Whittle's index for the average-cost criterion by first considering the discounted-cost criterion and then letting the discounting factor tend to one.  
In this section, we describe the machine repairman model as an RMABP in \emph{continuous time}, and allow state dependent transition rates with a general cost function. We use  Proposition~\ref{proposition:monotone_index} to derive Whittle's index for the average-cost criterion, and as special cases retrieve the indices of \cite{glazebrook2005index}. 
This generalizes the model in \cite{glazebrook2005index}, where the analysis was restricted to constant cost structure (for model 2) and state-independent transition rates. 



The state of the system at epoch $t$ is an $\mathcal{M}$-dimensional vector $X(t) = \{X_1(t), X_2(t), ..., X_{\mathcal{M}}(t)\},$ where $X_k(t)$ is the state of machine $k\leq \mathcal{M}$, and $X_k(t)\in \{0,1,\ldots\}$. The machine state measures the degree of deterioration and is assumed to evolve independently of the states of the other machines. When one of the $\mathcal{R}$ repairmen is working on machine~$k$, it makes a transition from state~$n_k$ to the pristine state~0 at repair rate $r_k(n_k)$.
If instead machine~$k$ is unattended, its state deteriorates from state $n_k$ to state $n_k+1$ after an exponential amount of time with deterioration rate  $\lambda_k(n_k)$.
In addition, if machine~$k$ in state~$n_k$ is unattended, it experiences a catastrophic breakdown at rate $\psi_k(n_k)$, after which the machine is replaced   by a new machine at a considerable lump cost $L_k^b(n_k)$. 

Let $L_k^r(n_k)$ be the lump cost of using the repairman in state $n_k$, where typically $L_k^b(n_k) >> L_k^r(n_k)$. 
Let $C_k^{d}(n_k)$ denote the per unit cost of deterioration for an unattended machine $k$  in state $n_k$. 
The objective is to deploy the repairmen in order to  minimize the total long-run average cost.

This problem can be cast in the RMABP framework as follows. 
Each machine is a bandit.  At each decision epoch, at most  $\mathcal{R}$ bandits/machines can be activated (repairman is send to it). A bandit is active ($a=1$) if a repairman is deployed to this  machine, and passive ($a=0$) otherwise.    
The machine repairman problem  is hence characterized by the following transition rates: 
\begin{equation}
\label{eq:trmr}
q_k^1(n_k,0) = r_k(n_k), \ q_k^0(n_k,0) = \psi_k(n_k) \text{ and }q_k^0(n_k,n_k+1) = \lambda_k(n_k),
\end{equation}
and the cost functions under action $a=0$ and $a=1$ are given by, 
\begin{eqnarray*}
&& C_k(n_k,0)=  \psi_k(n_k)L_k^b(n_k)+C_k^d(n_k),\\
&& C_k(n_k,1)=  r_k(n_k)L_k^r(n_k).
\end{eqnarray*}
%
We set $f_k(N_k^\phi, S_k^\phi(\vec{N})) = {S_k^\phi(\vec{N})}$, so that  
the constraint $\sum\limits_{k=1}^
\mathcal{M} f_k(N_k^\phi, S_k^\phi(\vec{N})) = \sum\limits_{k=1}^
\mathcal{M} S_k^\phi(\vec{N}) \le \mathcal{R},$
ensures that at most $\mathcal{R}$ repairmen are active at a time.

The transition rates~\eqref{eq:trmr} satisfy the sufficient conditions as presented in Proposition~\ref{threshold_optimality}, hence an optimal policy of the relaxed optimization problem (\ref{subproblem_k}) is of threshold type with 0-1 structure. Further, $\mathbb{E}(f_k(N_k^{n_k}, S_k^{n_k}({N}_k^{n_k}))) = \sum\limits_{m=0}^{n_k}\pi_k^{n_k}(m)$, with $\pi_k^{n_k}(m)$ as the stationary probability of being in state $m$ under threshold $n_k$. In addition, it can be verified that $\sum\limits_{m=0}^{n_k}\pi_k^{n_k}(m)$ is strictly increasing in $n_k$ if $r_k(n_k) \le r_k(n_k+1)$ for all $n_k$ (see Appendix~\ref{MRP_indexability}). Thus, the indexability follows from Proposition \ref{rmk_on_indexability}. 
Together with Proposition~\ref{proposition:monotone_index}, we obtain the following   closed-form expression for Whittle's index. We refer to Appendix~\ref{machine_index_general_model} for the proof. 

%
%
%

\begin{prop}\label{machine_index_general_model}
Assume  $r_k(n) \le r_k(n+1),~\forall~n$ {and $r_k(1)>0$}. It holds that bandit~$k$ is indexable.  
Consider
\begin{equation}\label{general_index}
\frac{ \left(C_{Sum}(n) + L_k^r(n+1)P_{n}\right)\left(P_{Sum}(n-1) + \frac{P_{n-1}}{r_k(n)}\right)- \left(C_{Sum}(n-1)  + L_k^r(n)P_{n-1}\right) \left(P_{Sum}(n) + \frac{P_{n}}{r_k(n+1)}\right)}{\frac{P_{n-1}}{r_k(n)}\sum\limits_{i=0}^{n}\frac{P_i}{\lambda_k(i)} - \frac{P_n}{r_k(n+1)}\sum\limits_{i=0}^{n-1}\frac{P_i}{\lambda_k(i)}},
\end{equation}
where $P_0 := 1, P_i := \prod\limits_{j=1}^ip_k(j)$, $p_k(j) := \frac{\lambda_k(j)}{\lambda_k(j) + \psi_k(j)}$, $P_{Sum}(n) :=\sum\limits_{i=0}^{n}\frac{P_i}{\lambda_k(i)}$ and $C_{Sum}(n) := \sum\limits_{i=1}^{n} \left[(P_{i-1} - P_i)L_k^b(i) + \frac{P_iC_k^{d}(i)}{\lambda_k(i)} \right]$.
If~\eqref{general_index} is monotone in $n$, then Whittle's index is given by~\eqref{general_index}.
\end{prop}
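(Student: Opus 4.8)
The strategy is to apply Proposition~\ref{proposition:monotone_index} directly, which reduces the task to (a) confirming threshold optimality and indexability, and (b) computing the expression \eqref{general_index_value} explicitly for this model. Threshold optimality of 0--1 type has already been argued from Proposition~\ref{threshold_optimality} using the transition structure \eqref{eq:trmr}. Indexability follows from Proposition~\ref{rmk_on_indexability} once we show $\mathbb{E}(f_k(N_k^{n_k},S_k^{n_k}(N_k^{n_k})))=\sum_{m=0}^{n_k}\pi_k^{n_k}(m)$ is non-negative and strictly increasing in $n_k$; the latter is the content of Appendix~\ref{MRP_indexability} under the hypothesis $r_k(n)\le r_k(n+1)$ and $r_k(1)>0$. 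So the real work is the explicit evaluation of the numerator and denominator of \eqref{general_index_value}.

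First I would write down the birth--death chain induced by a threshold-$n$ policy: states $0,1,\dots,n$ are passive, so from state $i<n$ the chain goes up to $i+1$ at rate $\lambda_k(i)$ and down to $0$ at rate $\psi_k(i)$ (the catastrophic breakdown), while at state $n$ (and any state $>n$, which is never reached in steady state once we truncate appropriately) a repairman works and the chain jumps to $0$ at rate $r_k(n+1)$. Actually, since threshold $n$ means passive on $\{m\le n\}$, the active state is $n+1$; I would set up balance equations for the stationary distribution $\pi_k^n$ on $\{0,1,\dots,n+1\}$, solve them in terms of the products $P_i=\prod_{j=1}^i p_k(j)$ with $p_k(j)=\lambda_k(j)/(\lambda_k(j)+\psi_k(j))$, and obtain expressions of the form $\pi_k^n(i)\propto P_i/\lambda_k(i)$ for $i\le n$ and $\pi_k^n(n+1)\propto P_n/r_k(n+1)$, with normalizing constant $P_{Sum}(n)+P_n/r_k(n+1)$. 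Then I compute $\mathbb{E}(f_k)=\sum_{m=0}^n\pi_k^n(m)$ and the expected total cost $\mathbb{E}(T_k^n)$, the latter being $\sum_{i} \pi_k^n(i)C_k(i,a_i)$ where $C_k(i,0)=\psi_k(i)L_k^b(i)+C_k^d(i)$ for $i\le n$ and $C_k(n+1,1)=r_k(n+1)L_k^r(n+1)$; collecting the $\psi_k(i)L_k^b(i)$ and $C_k^d(i)$ terms and using $\psi_k(i)P_i/\lambda_k(i)=(P_{i-1}-P_i)$ gives the quantity $C_{Sum}(n)$, plus the repair lump-cost term $L_k^r(n+1)P_n$.

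Next I would form the difference quotient \eqref{general_index_value} between thresholds $n$ and $n-1$. Because the stationary distributions for thresholds $n$ and $n-1$ have \emph{different} normalizing constants, the subtraction $\mathbb{E}(T_k^n)-\mathbb{E}(T_k^{n-1})$ and $\mathbb{E}(f_k^n)-\mathbb{E}(f_k^{n-1})$ each produce a single fraction whose common denominator is $\big(P_{Sum}(n)+\tfrac{P_n}{r_k(n+1)}\big)\big(P_{Sum}(n-1)+\tfrac{P_{n-1}}{r_k(n)}\big)$; this common denominator cancels when we take the ratio, leaving precisely the cross-multiplied form displayed in \eqref{general_index}. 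I would also note that the denominator of \eqref{general_index} is positive (equivalently $\mathbb{E}(f_k^n)>\mathbb{E}(f_k^{n-1})$), which is exactly the strict monotonicity used for indexability, so the ratio is well-defined. Finally, invoking Proposition~\ref{proposition:monotone_index} under the stated monotonicity hypothesis on \eqref{general_index} identifies this ratio with Whittle's index $W_k(n)$, and the 0--1 threshold structure gives that it is non-decreasing.

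The main obstacle is purely computational bookkeeping: carefully solving the finite birth--death balance equations (taking care that the ``active'' state sits one above the threshold, and that catastrophic breakdowns send mass back to $0$ rather than to a neighbour, so the chain is not a simple nearest-neighbour birth--death process), and then algebraically manipulating two fractions with distinct denominators so that the cross terms assemble into the compact form of \eqref{general_index}. No conceptual difficulty arises beyond verifying that the simplification $\psi_k(i)P_i/\lambda_k(i)=P_{i-1}-P_i$ telescopes the breakdown-cost contributions into $C_{Sum}$; I would relegate the full algebra to the appendix as the paper indicates.
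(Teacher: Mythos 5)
Your proposal is correct and follows essentially the same route as the paper: solve the balance equations of the threshold-$n$ chain to get $\pi_k^n(m)\propto P_m/\lambda_k(m)$ for $m\le n$ and $\pi_k^n(n+1)\propto P_n/r_k(n+1)$ with normalizing constant $P_{Sum}(n)+P_n/r_k(n+1)$, obtain indexability from the strict monotonicity of $\sum_{m=0}^n\pi_k^n(m)$ (the paper's Lemma in Appendix~\ref{MRP_indexability}), and then evaluate the difference quotient~\eqref{general_index_value}, where the telescoping identity $\psi_k(i)P_i/\lambda_k(i)=P_{i-1}-P_i$ yields $C_{Sum}$ and the two normalizing constants cancel to give the cross-multiplied form~\eqref{general_index}, after which Proposition~\ref{proposition:monotone_index} is invoked. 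Your algebraic outline (including the observation that the chain is not nearest-neighbour and that the active state sits at $n+1$) matches the paper's Appendix computation.
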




For the parameter settings such that \eqref{general_index} is monotone, the Whittle's index is given by \eqref{general_index}, for the others, the index can be computed by Algorithm \ref{alg}.

For specific choices of the parameters, Whittle's  indices were previously obtained in~\cite{glazebrook2005index}. We now show how they  follow directly from~\eqref{general_index}. 
In the first model considered in~\cite{glazebrook2005index}, there are no catastrophic breakdowns,  only deterioration costs are considered,  repair rates  and repair cost are state-independent. That is, $r_k(n)=r_k$, 
$\psi_k(n) = 0$,  $L_k^b(n) = 0$, $L_k^{r}(n) =  L_k^{r}$, $\forall ~n$.
In that case,~\eqref{general_index} reduces to 
\begin{equation}
\label{eq:why}
W_k(n)= r_k\left[ \sum\limits_{i=0}^{n-1}\frac{C_k^{d}(n) - C_k^{d}(i)}{\lambda_k(i)} + \frac{C_k^{d}(n) - r_k L_k^{r}}{r_k}\right],
\end{equation}
which is indeed monotone if $C_k^{d}(n)$ is an increasing sequence in $n$
(see Appendix~\ref{deterioration} for details). 

%

Recall that we consider a continuous-time model, while  \cite{glazebrook2005index} consider discrete time set-up. In particular, in the discrete-time model, a machine is repaired in one time slot. Setting $r_k =1$ (so that in our model the expected time to repair a machine equals~1) and interpreting $1/\lambda_k(n)$  as the expected time a machine (not under repair) spends  in state~$n$, Equation~\eqref{eq:why} matches the index as derived in~\cite[Corollary 1]{glazebrook2005index}. 

In the second model considered in~\cite{glazebrook2005index}, there are no deterioration cost, but  there is a lump cost for catastrophic breakdowns.
All lump costs and repair rates are state independent.
That is, $r_k(n)= r_k$, $C_k^{d}(n) = 0$, $L_k^r(n)= R_k, L_k^b(n) = B_k,~\forall~n$. 
Now,~\eqref{general_index} reduces to  
\begin{equation}
W_k(n) = \frac{B_k\left(\frac{1-p_k(n)}{r_k} - \frac{p_k(n)}{\lambda_k(n)} + \sum\limits_{i=0}^n\frac{P_i}{\lambda_k(i)} - p_k(n)\sum\limits_{i=0}^{n-1}\frac{P_i}{\lambda_k(i)}\right)}{\frac{1}{r_k}\left( \sum\limits_{i=0}^n\frac{P_i}{\lambda_k(i)} - p_k(n)\sum\limits_{i=0}^{n-1}\frac{P_i}{\lambda_k(i)} \right)} -R_k,
\end{equation}
which is indeed monotone if $\psi_k(n)$ is an increasing sequence in~$n$  (see Appendix \ref{lump_cost} for details). 

In the discrete-time model of \cite{glazebrook2005index}, it is assumed that the time it takes to change state is equal to~1. Setting $\frac{1}{r_k} =1$ and $\frac{1}{\lambda_k(n) +\psi_k(n)} = 1~\forall~n,$ (so that in the continuous-time model the expected time to change state equals 1) we retrieve 
\begin{equation}
W_k(n) = \frac{B_k\left(\hat{T}(n) - p(n)\hat{T}(n-1)-p(n)\right)}{\left(\hat{T}(n) - p(n)\hat{T}(n-1)\right)} - R_k,
\end{equation}
where $\hat{T}(n) :=  \sum\limits_{i=0}^{n}\frac{P_i}{\lambda_k(i)}$    can be interpreted as the expected duration to either reach state~$n$ or that a catastrophic breakdown occurs, when starting in state 0.  As such, we retrieve the  Whittle's index as obtained in \cite[Corollary 2]{glazebrook2005index}   for the discrete-time case. 

\section{Applications: infinite transition rates}\label{infinite_transition_rates}
\label{sec:infinite}

To the best of our knowledge, the calculation for Whittle's index has not been earlier explored for impulse control, that is, infinite transition rates. In this section, we illustrate the applicability of impulse control in the domain of congestion control in TCP and content delivery network. 

\subsection{Congestion control of TCP flows}\label{TCP}


We consider the following congestion control model for a simple version of TCP (Transmission Control Protocol), which is the algorithm that regulates the congestion on the Internet. There are $K$ flows trying to deliver packets to their destination via a bottleneck router with buffer size $B$. We assume that flow $k$ has implemented an additive-increase/multiplicative-decrease (AIMD) mechanism as in TCP. The congestion window of each flow is adapted according to received acknowledgements. For each positive acknowledgement (ACK), the congestion window is increased by the reciprocal of it's current value,  which approximately corresponds to an increase by one packet during a round-trip-time (RTT) without lost packets. We model this with Poisson arrivals of packets for flow $k$ with arrival rate $\lambda_k$, which correspond to the time period of one RTT. 
 For each negative acknowledgement (NACK), the congestion window is \emph{immediately} decreased using the formula, 
$\text{congestion window} = \max\{\lfloor\gamma_k\times \text{congestion window}\rfloor, 1\}$,
where $0 \le \gamma_k<1$ is the multiplicative decrease factor and the function $\lfloor.\rfloor$ denotes the floor function.
{Whenever the bottleneck router is full, a controller decides which flow will receive a NACK in order to maximize the average reward. We take reward as the generalized $\alpha$-fairness function $R_k^{(\alpha)}(n)$, which is a function of $n$, the number of  outstanding  packets of flow~$k$:
 \[ R_k^{(\alpha)}(n) =
  \begin{cases}
    \frac{(1+n)^{(1-\alpha)}-1}{1-\alpha}       & \quad \text{if } \alpha \neq 1,\\
    \log(n+1)  & \quad \text{if } \alpha = 1.
  \end{cases}
\]}
  The above reward is earned if the controller admits the packet to the router ($a=1$ or ACK), and 0 otherwise ($a=0$ or NACK). The parameterized family of the above generalized $\alpha-$fair reward aims at maximizing the router's total aggregated utility. Note that the parameter $\alpha$ permits to recover a wide variety of utilities such as max-min, maximum throughput and proportional fairness (see \cite{mo2000fair, altman2008generalized}).  


 We model the above scenario as an RMABP where each flow represents a bandit and the state of the bandit $k$, $n_k$, represents the number of outstanding packets. The parameters of bandit~$k$ are:
\begin{eqnarray*}
&&{\mathcal{I}_k(n_k,0)}= 1~\text{ and }  p_k^0(n_k, \max\{\lfloor\gamma_k.n_k \rfloor, 1\}) = 1,\\
&&{\mathcal{I}_k(n_k,1)}= 0~\text{ and }
q_k^1(n_k, n_k+1) = \lambda_k,
\end{eqnarray*}
where action  $a=1~(a=0)$ stands for sending ACK (NACK). We set  $f_k(N_k^\phi, S_k^\phi(\vec{N})) = N_k^\phi S_k^\phi(\vec{N})$, which  ensures that all flows that receive an ACK can send their outstanding packets through the bottleneck router with buffer size $B$, $\sum\limits_{k=1}^Kf_k(N_k^\phi, S_k^\phi(\vec{N})) \le B$. 
Setting ${\mathcal{I}_k(n_k,0)}= 1$  ensures that the congestion window of flow $k$ instantaneously changes when it receives a NACK. We further set  $C_k(n,0)=0$ and $C_k(n,1)=-R^{(\alpha)}_k(n)$.

It follows directly from Proposition~\ref{threshold_optimality} that an optimal policy of the relaxed optimization problem (\ref{ergodic_subproblem_k}) is of threshold type with 1-0 structure.  It can be verified that $\mathbb{E}(f_k(N_k^{n_k}, S_k^{n_k}({N}_k^{n_k}))) =\sum\limits_{m=0}^{n_k}m\pi_k^{n_k}(m)$ is strictly increasing in $n_k$ (see Appendix \ref{indexability_TCP} for details), so that indexability follows from Proposition \ref{rmk_on_indexability}. Together with Proposition \ref{proposition:monotone_index}, the following result characterizes a closed form expression for Whittle's index (see Appendix \ref{expression_index} for details).

\begin{lem}\label{Index_for_TCP}
Consider  
 \begin{equation}
 \label{eq:o}
  \begin{cases}
\frac{2\lambda_k(n-S)(1-(1+n)^{1-\alpha})- \sum\limits_{m=S}^{n-1}(1-(1+m)^{1-\alpha})}{(n-S)(n-S+1)(1-\alpha)}           & \quad \text{if } \alpha \neq 1,\\
    \frac{2\lambda_k\left(\sum\limits_{m=S}^{n-1}\log(1+m)- (n-S)\log(1+n)\right)}{(n-S)(n-S+1)}  & \quad \text{if } \alpha = 1,
  \end{cases}
\end{equation}
with $S = \max\{\lfloor\gamma_k.(n+1) \rfloor, 1\}$.
If~\eqref{eq:o} is monotone in $n$, then Whittle's index is given by~\eqref{eq:o}.
\end{lem}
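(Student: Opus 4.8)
\textbf{Proof plan for Lemma~\ref{Index_for_TCP}.}

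The plan is to apply Proposition~\ref{proposition:monotone_index} directly. By the discussion preceding the statement, we already know that (a)~an optimal solution of the relaxed subproblem~\eqref{ergodic_subproblem_k} is of threshold type with 1-0 structure (from Proposition~\ref{threshold_optimality}, since under the active action there is only the upward jump $n_k\to n_k+1$ at rate $\lambda_k$, and the passive action triggers an impulse downward), and (b)~the bandit is indexable, because $\mathbb{E}(f_k(N_k^{n_k},S_k^{n_k}(N_k^{n_k})))=\sum_{m=0}^{n_k}m\,\pi_k^{n_k}(m)$ is non-negative and strictly increasing in $n_k$, so Proposition~\ref{rmk_on_indexability} applies. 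Hence, by Proposition~\ref{proposition:monotone_index}, it suffices to compute the ratio~\eqref{general_index_value} explicitly for this model and check that, when it is monotone in $n$, it coincides with~\eqref{eq:o}; monotonicity of~\eqref{eq:o} then yields that Whittle's index equals~\eqref{eq:o}.

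The main computational work is to evaluate, for a 1-0 threshold policy with threshold $n$, the steady-state quantities $\mathbb{E}(T_k^n(N_k^n,\cdot))$ and $\mathbb{E}(f_k(N_k^n,\cdot))$ and then form the difference quotient between thresholds $n$ and $n-1$. First I would describe the Markov chain under threshold $n$: the bandit receives ACKs (active, rate $\lambda_k$, state $m\to m+1$) for all $m>n$, and when it would exceed... more precisely, under the 1-0 rule it is active in states $m\le n$ and passive (NACK, impulse $m\mapsto S=\max\{\lfloor\gamma_k m\rfloor,1\}$) in states $m>n$; but since the only upward transition is at rate $\lambda_k$ and the impulse fires at the first state above $n$, the chain effectively cycles deterministically (in state, not in time) upward from $S' := \max\{\lfloor\gamma_k(n+1)\rfloor,1\}$ through $n+1$ and then jumps back. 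I would compute the stationary distribution of this cycle: the time spent in each state $m$ with $S'\le m\le n$ is exponential with rate $\lambda_k$, state $n+1$ is visited only instantaneously (impulse), so $\pi_k^n(m)=\tfrac{1/\lambda_k}{(n+1-S')/\lambda_k}=\tfrac1{n+1-S'}$ for $S'\le m\le n$. From this, $\mathbb{E}(f_k(N_k^n,\cdot))=\sum_{m=S'}^{n}m\cdot\tfrac1{n+1-S'}$ and $\mathbb{E}(T_k^n(N_k^n,\cdot))=-\sum_{m=S'}^{n}R_k^{(\alpha)}(m)\cdot\tfrac1{n+1-S'}$ (there is no impulse lump cost here, $L_k^\infty\equiv 0$, and $C_k(m,1)=-R_k^{(\alpha)}(m)$, $C_k(m,0)=0$). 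Substituting $R_k^{(\alpha)}$ and carefully forming the quotient $\bigl(\mathbb{E}(T_k^n)-\mathbb{E}(T_k^{n-1})\bigr)/\bigl(\mathbb{E}(f_k^n)-\mathbb{E}(f_k^{n-1})\bigr)$, with the bookkeeping that $S=\max\{\lfloor\gamma_k(n+1)\rfloor,1\}$ is the threshold-$n$ return state, should collapse the telescoping sums into~\eqref{eq:o}; the $\alpha=1$ case follows the same route with $\log(1+m)$ in place of $\tfrac{(1+m)^{1-\alpha}-1}{1-\alpha}$, or by taking $\alpha\to1$.

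The main obstacle is purely the discrete bookkeeping around the floor function: the return state $S$ depends on the threshold, and when moving from threshold $n-1$ to threshold $n$ the summation ranges shift in a way that is not uniform (the lower limit $\max\{\lfloor\gamma_k m\rfloor,1\}$ can stay fixed or jump), so the ``telescoping'' is only clean once one fixes notation for which return state is associated with which threshold. I would handle this by writing everything in terms of the single quantity $S=\max\{\lfloor\gamma_k(n+1)\rfloor,1\}$ appearing in~\eqref{eq:o} and verifying the identity $\mathbb{E}(f_k(N_k^{n}))-\mathbb{E}(f_k(N_k^{n-1}))=\tfrac12(n-S)(n-S+1)\big/\text{(normalization)}$ (up to the constant that cancels in the ratio) separately; modulo this care, the rest is routine algebra. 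The monotonicity hypothesis on~\eqref{eq:o} is assumed, not proved, exactly as in Proposition~\ref{proposition:monotone_index}, so no further argument is needed there; the claim follows.
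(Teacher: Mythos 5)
Your overall route is exactly the paper's: establish 1-0 threshold optimality via Proposition~\ref{threshold_optimality}, indexability via Proposition~\ref{rmk_on_indexability} using strict monotonicity of $\mathbb{E}(f_k(N_k^{n},S_k^{n}(N_k^{n})))$, then invoke Proposition~\ref{proposition:monotone_index} and evaluate the difference quotient \eqref{general_index_value} under the threshold-$n$ chain, whose stationary distribution is uniform, $\pi_k^n(m)=1/(n-S+1)$ on $\{S,\dots,n\}$ and zero elsewhere. The paper likewise computes $\mathbb{E}(f_k(N_k^{n},S_k^{n}(N_k^{n})))=\frac{n^2+n-S(S-1)}{2(n-S+1)}$ with increment $1/2$ as in \eqref{eq:ff}, and your attention to the floor-function bookkeeping for $S$ across consecutive thresholds is, if anything, more careful than the paper's.

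The one genuine discrepancy is in the reward term. You set $\mathbb{E}(T_k^n)=-\sum_{m=S}^{n}R_k^{(\alpha)}(m)\,\pi_k^n(m)$, i.e.\ you treat $-R_k^{(\alpha)}(m)$ as the cost \emph{rate} while active in state $m$. In this model the reward is earned per admitted packet, and packets are admitted at rate $\lambda_k$ while the flow is ACKed, so the paper's proof evaluates $\mathbb{E}(T_k^n(N_k^n, S_k^n({N}_k^n))) = \sum_{m=0}^nC_k(m,1)\lambda_k \pi_k^n(m) + C_k(n+1,0)\lambda_k\pi_k^n(n+1)$, i.e.\ with an extra factor $\lambda_k$ multiplying the $\alpha$-fair utility. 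Since the denominator increment is the $\lambda_k$-free constant $1/2$, your quotient would come out as \eqref{eq:o} divided by $\lambda_k$ and would therefore not reproduce the stated index. This is a small, fixable bookkeeping error (insert the rate $\lambda_k$ at which the reward is collected while active), but as written the final ``collapse into \eqref{eq:o}'' step fails by exactly that factor; the remainder of your plan (the uniform cycle distribution, the telescoping of the sums over $\{S,\dots,n\}$, and leaving monotonicity as an assumed hypothesis) coincides with the paper's argument.
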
 

If monotonicity of Whittle's index cannot be established, the index can be computed using Algorithm~\ref{alg}. 

The index-based policy (Definition \ref{index_heuristic}) would be as follows:  activate/ACK all flows with highest Whittle's indices until the buffer is filled. All other flows receive a NACK. Since a NACK implies a multiplicative decrease, it is direct that  if the buffer is full, under the index policy only the flow with the smallest index value (\ref{eq:o})  will receive a NACK.

The paper \cite{avrachenkov2013congestion} studied as well the congestion control problem as a  multi-armed restless bandit framework.  This work however assumes discrete time, and as such, no infinite transition rates could be considered.  In addition, for their analysis, they require  a bound on  the number of outstanding packets per flow. \cite{avrachenkov2013congestion}  numerically verify for indexability and obtain Whittle's indices in closed form only in the case of at most three outstanding packets per flow. 
Our approach considerably simplifies the analysis, allows to prove indexability and provides an analytical expression for Whittle's index.



\subsection{Content delivery network}\label{CDN}

In this section, we adopt the basic model of a content delivery network, as explored in \cite{larranaga2015efficient}. Our primary focus is to  illustrate the optimality of Whittle's index policy for a single-armed bandit (Remark \ref{single_bandit_optimality})  with impulse control. 

In a content delivery network, the bulk of traffic (e.g.\ software updates, video content etc.) is delay tolerant. Hence, requests can be delayed and grouped, so as to be transmitted in a multi-cast mode through the network. The challenge is to balance the \emph{gains} of grouping requests in order to save transmission capacity against the \emph{risk} of not meeting the deadline of one or more jobs. {We assume that all the jobs instantaneously depart upon receiving the service.}

Let the state $n$ denote the number of waiting requests. 
Let jobs arrive according to  a Poisson process with a state dependent rate $\lambda(n)$.  
For each request, after a  state dependent exponential  expiration time with rate $\theta(n)/n$, the request abandons the system (deadline is passed). 
Upon activation of the server, all waiting requests are cleared instantaneously. The objective is to minimize the long-run average cost incurred by the waiting jobs, by abandonments, as well as the set-up cost paid upon activation of the server.

 Let  $C^h(n)$ be the state-dependent cost per unit of time that $n$ requests are held in the queue. Let $L^a(n)$ be the state-dependent penalty (lump cost) for a job abandoning the queue, which may depend on the action~$a$ chosen. Let  $L_s^\infty(n)$ be the set-up cost (lump cost) of clearing the batch of size $n$. 

We can model this as a single armed restless bandit, where action $a=1 ~(a=0)$ stands for serving (not serving) all the waiting requests. We drop the subscript $k$ since there is only one bandit. We have the following transitions:
\begin{eqnarray*}
&&{\mathcal{I}(n,0)}= 0~\text{ and }  q^0(n,n +1) = \lambda(n), \  q^0(n, n-1) = \theta(n),\\
&&{\mathcal{I}(n,1)}= 1~\text{ and } 
p^1(n, 0) = 1.
\end{eqnarray*}
We further set $f(N^\phi, S^\phi(\vec{N})) = -\mathbf{1}_{\{N^\phi \in \mathcal{M}^\phi\}}$, where $\mathcal{M}^\phi = \{ m\in \{0,1,2,...\}: S^\phi(m) = 0, S^\phi(m+1) = 1 \}$, in order to find an \emph{optimal} activation policy (see Remark~\ref{single_bandit_optimality}). Note that we need the function $f(.)$ to  depend on policy $\phi$, different to what was assumed in Section~\ref{sec:model}. We did so in order to find a constraint function $f$ for which it holds that $\mathbb{E}(f(N^{n}, S^{n}({N}^{n})))$ is strictly increasing. Taking for example $f(n,a)=1-a$, would result in $\mathbb{E}(f(N^{n}, S^{n}({N}^{n})))=1$, since when looking to the time-average, one is always passive. 
 It can be checked that all results of this paper (and their proofs) hold true for this given choice of function $f(N^\phi, S^\phi(\vec{N})) = -\mathbf{1}_{\{N^\phi \in \mathcal{M}^\phi\}}$.

 An optimal policy of the relaxed optimization problem (\ref{ergodic_subproblem_k}) is of threshold type with 0-1 structure. The latter follows directly from Proposition \ref{threshold_optimality}. In addition, it can be verified that, if $\lambda(n)$ is non-decreasing, then  $E(f(N^n, S^n(\vec{N})))=-\pi^n(n)$  is strictly increasing in $n$, see Appendix~ \ref{CDN_appendix}.  Thus, the indexability follows from Proposition \ref{rmk_on_indexability}.  From Proposition \ref{proposition:monotone_index}, we obtain an analytical expression for Whittle's index (see Appendix \ref{CDN_appendix} for details).

\begin{lem}\label{index_result_CDN}
 Assume $\lambda(n)$ is non-decreasing. Consider 
 \begin{equation}\label{index_CDN_general}
 \frac{\sum\limits_{i=1}^{n-1}\tilde{C}(i)(\pi^n(i) - \pi^{n-1}(i)) + \tilde{C}(n)\pi^n(n) + L_s^\infty(n+1) \lambda(n)\pi^{n}{(n)} - L_s^\infty(n)\lambda(n-1)\pi^{n-1}{(n-1)}}{\pi^{n-1}{(n-1)}-\pi^{n}{(n)}},
\end{equation}
where $\tilde{C}(n) = nC^h(n) + \theta(n)L^a(n)$.
If (\ref{index_CDN_general}) is non-decreasing in $n$, then Whittle's index is given by~(\ref{index_CDN_general}). 
\end{lem}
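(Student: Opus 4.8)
The plan is to apply Proposition~\ref{proposition:monotone_index} directly, so the work splits into three parts: verifying its hypotheses, computing the quotient \eqref{general_index_value} explicitly for this bandit, and (optionally) checking the monotonicity claim. The hypotheses of Proposition~\ref{proposition:monotone_index} are that an optimal solution of \eqref{ergodic_subproblem_k} is of threshold type and that the bandit is indexable; both have already been argued in the text immediately preceding the statement (threshold optimality from Proposition~\ref{threshold_optimality}, using that $q^1$ is an impulse and $q^0$ is birth-death with unit steps; indexability from Proposition~\ref{rmk_on_indexability}, using that $\mathbb{E}(f(N^n,S^n(N^n)))=-\pi^n(n)$ is strictly increasing in $n$ when $\lambda(n)$ is non-decreasing, which is proved in Appendix~\ref{CDN_appendix}). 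So the substantive step is the explicit evaluation of \eqref{general_index_value}.

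First I would write down, for a 0-1 threshold policy with threshold $n$, the stationary distribution $\pi^n(\cdot)$ of the birth-death chain on $\{0,1,\dots,n+1\}$ with birth rates $\lambda(i)$ and death rates $\theta(i)$, together with the instantaneous return from $n+1$ to $0$ (the impulse, since the server is activated exactly in state $n+1$). Next I would assemble the two expectations appearing in the numerator and denominator of \eqref{general_index_value}: the denominator is $\mathbb{E}(f(N^n,S^n(N^n)))-\mathbb{E}(f(N^{n-1},S^{n-1}(N^{n-1}))) = -\pi^n(n)+\pi^{n-1}(n-1) = \pi^{n-1}(n-1)-\pi^{n}(n)$, matching the stated denominator; the numerator is $\mathbb{E}(T^n)-\mathbb{E}(T^{n-1})$ where $T$ collects the holding cost $C^h(\cdot)$, the abandonment lump cost $L^a(\cdot)$ converted to a rate via $\theta(\cdot)$, and the set-up lump cost $L^\infty_s(\cdot)$ converted to a rate via the impulse (which occurs at rate $\lambda(n)$ out of state $n$ under threshold $n$). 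Writing $\tilde C(n)=nC^h(n)+\theta(n)L^a(n)$ for the per-unit-time cost in state $n$ and noting the set-up cost contributes $L^\infty_s(n+1)\lambda(n)\pi^n(n)$ under threshold $n$, I would expand $\mathbb{E}(T^n)=\sum_{i} \tilde C(i)\pi^n(i)+L^\infty_s(n+1)\lambda(n)\pi^n(n)$, subtract the $n-1$ version, and observe that the sums telescope into $\sum_{i=1}^{n-1}\tilde C(i)(\pi^n(i)-\pi^{n-1}(i))+\tilde C(n)\pi^n(n)$ (the state-$0$ terms vanish because $\tilde C(0)=0$, and $\pi^{n-1}$ is supported on $\{0,\dots,n\}$). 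This reproduces \eqref{index_CDN_general}.

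The remaining piece is the parenthetical "if \eqref{index_CDN_general} is non-decreasing in $n$" clause: under that hypothesis Proposition~\ref{proposition:monotone_index} gives that \eqref{index_CDN_general} equals Whittle's index $W(n)$ verbatim, so nothing further is needed — the lemma is exactly the specialization of the proposition. I would not attempt to prove monotonicity here (it is parameter-dependent and, as the surrounding text notes, when it fails one falls back on Algorithm~\ref{alg}). The main obstacle is bookkeeping rather than ideas: correctly accounting for the impulse transition both in the constraint function $f$ (which here is the nonstandard policy-dependent choice $f=-\mathbf{1}_{\{N^\phi\in\mathcal M^\phi\}}$, so one must use that the text has asserted all results still apply) and in the cost $T$ via $C^{\infty,\phi}$, and making sure the telescoping of the cost sums is handled correctly given that $\pi^n$ and $\pi^{n-1}$ have different supports. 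Once the stationary distributions are in hand, the computation is routine algebra; the full details are deferred to Appendix~\ref{CDN_appendix}.
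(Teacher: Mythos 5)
Your proposal is correct and follows essentially the same route as the paper: threshold optimality and indexability are taken from Propositions~\ref{threshold_optimality} and~\ref{rmk_on_indexability} (with $\mathbb{E}(f(N^n,S^n(N^n)))=-\pi^n(n)$ strictly increasing when $\lambda(n)$ is non-decreasing), and the index is obtained by computing the stationary distribution under threshold $n$, writing $\mathbb{E}(T^n)=\sum_{i=1}^n \tilde C(i)\pi^n(i)+\lambda(n)L_s^\infty(n+1)\pi^n(n)$, and substituting into the quotient \eqref{general_index_value} of Proposition~\ref{proposition:monotone_index}, exactly as in Appendix~\ref{CDN_appendix}. The differencing and the identification of the denominator as $\pi^{n-1}(n-1)-\pi^n(n)$ match the paper's calculation, so no gap remains.
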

We conclude that  an optimal policy in the content delivery problem is to activate the bandit/queue whenever the numerator of \eqref{index_CDN_general}   is   non-negative.  This follows directly  from Remark \ref{single_bandit_optimality}. 

If the rates and costs are state-independent, i.e., $\lambda(n) = \lambda,~\theta(n) = i\theta,~C^h(n)=C^h,~L^a(n)=L^a\text{ and }L_s^\infty(n)=L_s^\infty,~\forall~n$,   the optimal policy  simplifies to activating whenever $\tilde{C} \left(\mathbb{E}(N^n)- \mathbb{E}(N^{n-1})\right) - \lambda L_s^\infty (\pi^{n-1}{(n-1)}-\pi^{n}{(n)})\geq 0$, with $\tilde{C} = C^h + \theta L^a$. This coincides with the results obtained in~\cite[Proposition 3]{larranaga2015efficient}. 


\section{Conclusions and future research}
In the main contribution of this work, we derive {an analytical} expression for Whittle's index under the   average-cost criterion when each bandit evolves as a continuous-time Markov chain with possible impulse control.
 The Whittle's index is given in a compact expression, which makes the implementation of the index heuristic easier. We show that with this general formula, we can   retrieve    Whittle's index for many application, that were previously derived in the literature on a case-by-case basis.


\section*{Acknowledgement}{
This research is partially supported by the French Agence Nationale de la Recherche (ANR) through the project ANR-15-CE25-0004 (ANR JCJC
RACON) and  by ANR-11-LABX-0040-CIMI within the program 
ANR-11-IDEX-0002-02.}

\bibliographystyle{plainnat}
\bibliography{references}

\newpage
\appendices


\section{Proof of Propositions}
In this section we provide the proofs of different propositions.  
For ease of notation, we removed the subscript $k$ from all the proofs in this section. 

\subsection{Proof of Proposition \ref{threshold_optimality}}

\begin{proof}{}
Since $\mathcal{U}_{REL}$ is non-empty,   there exists a stationary optimal policy $\phi^*$ that optimally solves the subproblem (\ref{ergodic_subproblem_k}) for a bandit. Define $n^*= \min\{ m\in\{ 0,1,...\}: S^{\phi^*}(m)=1 \}$. This implies $S^{\phi^*}(m)=0~\forall~m<n^*$ and $S^{\phi^*}(n^*)=1$. From the structure on the transition rates and jump probabilities in $(i)$ of Proposition~\ref{threshold_optimality}, we have 
$q_k^1(N, N+i) = 0,~\forall~i\ge 1$,  $ \ q_k^0(N, N+i) = 0,~\forall~i\ge 2$, and   $p_k^a(N, N+i) =0,~\forall~i\ge 1, a =0, 1$. The above transition structure ensures that all the states $m> n^*$ are transient. Hence $\pi^{\phi^*}(m)=0 ~\forall~ m >n^*$. Thus, the following holds under the optimal policy $\phi^*$: 
\begin{eqnarray}\nonumber
\mathbb{E}(C(N^{\phi^*}, S^\phi(\vec{N}^{\phi^*}))) &=& \sum_{m=0}^{n^*-1}C(m,0)\pi^{\phi^*}(m) + C(n^*,1)\pi^{\phi^*}(n^*),\\\nonumber
\mathbb{E}(f(N^{\phi^*}, S^\phi(\vec{N}^{\phi^*}))) &=& \sum_{m=0}^{n^*-1}f(m,0)\pi^{\phi^*}(m) + f(n^*,1)\pi^{\phi^*}(n^*),
\end{eqnarray}
and lump cost under the optimal policy $\phi^*$ is given by:
\begin{eqnarray*}
\mathbb{E}(C^{\infty, \phi^*}({N}^{\phi^*}, S^{\phi^*}(\vec{N}^{\phi^*})) = &&\sum_{\tilde{n}}\sum_{m}\mathbb{E}\left( q^{S^{\phi^*}(\vec{N}^{\phi^*})}(N^{\phi^*}, \tilde{n}) \times \mathcal{I}(\tilde{n}, S^{\phi^*}(\vec{M}^{\phi^*}(\vec{N}^{\phi^*}, \tilde{n})))\right.
\\
&&\left.\times{ p^{S^{\phi^*}(\vec{M}^{\phi^*}(\vec{N}^{\phi^*}, \tilde{n})}(\tilde{n},m) \times  L^\infty(\tilde{n}, m, S^{\phi^*}(\vec{M}^{\phi^*}(\vec{N}^{\phi^*}, \tilde{n})) }\right),
\end{eqnarray*}
%
From  Markov chain theory, the average number of times state $y$ is visited in the next decision epoch under action $a$ given the current state $x$ can be written as:
$$\lim_{N \rightarrow \infty}\frac{1}{N} \sum_{n=1}^N 1^a_{\{X_n=x,~ X_{n+1} = y\}}=\pi(x)q^a(x,y).$$
Given that the parameters satisfy (i) of Proposition~\ref{threshold_optimality}, the lump cost can equivalently be written as;
$$
\mathbb{E}(C^{\infty, \phi^*}({N}^{\phi^*}, S^{\phi^*}(\vec{N}^{\phi^*})) = \sum_{n=0}^{n^*-1}\sum_{m=0}^n\left( p^0(n,m) L^\infty(n, m, 0)\left[ \sum_{k=0, k \neq n}^{n^* -1}q^0(k,n)\pi^{\phi^*}(k) + q^1(n^*,n)\pi^{\phi^*}(n^*)\right]\right) $$
$$+ \sum_{m=0}^{n^*}\left( p^1(n^*,m) L^\infty(n^*, m, 1)\left[ \sum_{k=0}^{n^* -1}q^0(k,n^*)\pi^{\phi^*}(k)\right]\right).$$
In the above expected lump cost, the first term is the contribution in cost due to transition from states $0,1,2, \cdots, n^*-1$ and the second term is that for the transition from state $n^*$. Additionally, it exploits the fact that $\pi^{\phi^*}(m)=0 ~\forall~ m >n^*$. It follows from the expressions of the above expected costs that the long run average cost under the optimal policy $\phi^*$,
$$\mathbb{E}(C(N^{\phi^*}, S^{\phi^*}(N^{\phi^*})))~+\mathbb{E}(C^{\infty, \phi^*}(N^{\phi^*}, S^{\phi^*}(N^{\phi^*}))) - W \mathbb{E}(f(N^{\phi^*}, S^{\phi^*}(N^{\phi^*}))),$$
is the same as the long run average cost under a 0-1 type threshold policy with threshold $n^*$,
$$\mathbb{E}(C(N^{n^*}, S^{n^*}(N^{n^*})))~+\mathbb{E}(C^{\infty, n^*}(N^{n^*}, S^{n^*}(N^{n^*}))) - W \mathbb{E}(f(N^{n^*}, S^{n^*}(N^{n^*}))).$$
Thus, a 0-1 type of threshold policy with threshold $n^*$ is optimal when $(i)$ is satisfied.  The alternate rates $(ii)$ can be proven to result in 0-1 type threshold optimality along the similar lines by considering the set $\max\{ m\in\{ 0,1,...\}: S^{\phi^*}(m)=0 \}$.
%
\end{proof}

\subsection{Proof of Proposition \ref{rmk_on_indexability} }
\begin{proof}{}


We will focus on 0-1 type of threshold policies throughout the proof. The case of threshold policies of type 1-0 can be proven similarly. 
Since an optimal solution of problem (\ref{ergodic_subproblem_k}) is of threshold type for a given subsidy $W$, the optimal average cost will be $g(W) := \min\limits_n{g^{(n)}(W)}$ where 
$$g^{(n)}(W) = \mathbb{E}(T(N^n, S^n(N^n))) - W \mathbb{E}(f(N^n, S^n(N^n))).$$

We denote the minimizer of $g(W)$ by $n(W)$. Note that the function $g(W)$ is a lower envelope of affine non-increasing functions of $W$ due to the non-negative nature of $\mathbb{E}(f(\cdot ))$. It thus follows that $g(W)$ is a concave non-increasing function. 

It follows directly that the right derivative of $g(W)$ in $W$ is given by $-\mathbb{E}(f(N^{n(W)}, S^{n(W)}(N^{n(W)})))$. Since $g(W)$ is concave in $W$, the right derivative is non-increasing in $W$. Together with the fact $\mathbb{E}(f(N^n, S^n(N^n)))$ is strictly increasing in $n$, it hence follows that $n(W)$ is non-decreasing in $W$. Since an optimal policy is of 0-1 threshold type, the set of states where it is optimal to be passive can be written as $D(W) = \{m: m \le n(W)\}$. Since $n(W)$ is non-decreasing, by definition this implies that bandit $k$ is indexable.

\end{proof}

\subsection{Proof of Proposition \ref{proposition:monotone_index}}
{  

We will focus on 0-1 type of threshold policies throughout the proof. Let $\tilde{W}(n)$ be the value for subsidy such that the average cost under threshold policy $n$ is equal to that under threshold policy $n-1$. By using (\ref{ergodic_subproblem_k}), we have $\mathbb{E}(T(N^n, S^n(N^n))) - \tilde{W}(n) \mathbb{E}(f(N^n, S^n(N^n))) = \mathbb{E}(T(N^{n-1}, S^{n-1}(N^{n-1}))) - \tilde{W}(n) \mathbb{E}(f(N^{n-1}, S^{n-1}(N^{n-1})))$. Hence, $\tilde{W}(n)$ is given by, 
$$\frac{\mathbb{E}(T^n(N^n, S^n({N}^n))) - \mathbb{E}(T^{n-1}(N^{{n-1}}, S^{{n-1}}({N}^{{n-1}})))}{\mathbb{E}(f(N^{n}, S^n({N}^{n}))) - \mathbb{E}(f(N^{{n-1}}, S^{{n-1}}({N}^{{n-1}})))},$$ 
which is {the} same as (\ref{general_index_value}). Since $\tilde{W}(n)$ is monotone, it can be verified by exploiting threshold optimality that $g(\tilde{W}(n)) = g^{(n)}(\tilde{W}(n)) = g^{(n-1)}(\tilde{W}(n))$. Similarly, $g(\tilde{W}(n-1)) = g^{(n-1)}(\tilde{W}(n-1)) = g^{(n-2)}(\tilde{W}(n-1))$. Further, monotonicity of $\tilde{W}(n)$ implies the following two possibilities:
\begin{enumerate}
\item Non-decreasing nature, i.e., $\tilde{W}(n-1)\le \tilde{W}(n)$ 
\item Non-increasing nature, i.e., $\tilde{W}(n-1)\ge  \tilde{W}(n)$ 
\end{enumerate}
But  $\tilde{W}(n-1)\ge  \tilde{W}(n)$ results in a contradiction from indexability and 0-1 type of threshold optimality. Thus, $\tilde{W}(n)$ has to be non-decreasing, i.e., $\tilde{W}(n-1)\le \tilde{W}(n)$.

It follows from indexability and 0-1 type of threshold optimality that for all $W \le \tilde{W}(n)$, the set of states where it is optimal to be passive, $D(W)$, satisfies $D(W) \subseteq \{m:m\le n-1\}$. Again from indexability in a similar way, $D(W) \supseteq {\{m:m\le n-1\}}$  for all $ W \ge \tilde{W}(n-1)$. Thus, {for $ \tilde{W}(n-1) \le  W \le \tilde{W}(n)$, $\{m:m\le n-1\} \subseteq D(W) \subseteq  \{m:m\le n-1\}$ which implies} that threshold policy $n-1$ is optimal for all $\tilde{W}(n-1) \le W \le \tilde{W}(n)$ and hence $g(W) = g^{(n-1)}(W)$ for $\tilde{W}(n-1) \le W \le \tilde{W}(n)$. Hence, $\tilde W(n)$ is the smallest value of the subsidy such that activating the bandit in state~$n$ becomes optimal, that is, Whittle's index is given by $W(n) = \tilde{W}(n)$. 

}

\section{Machine repairman problem}

In this section, we provide the details to obtain the  stationary distribution, prove indexability and derive Whittle's index for two specific models of the  machine repairman problem of Section \ref{sec:machine_repairman}.

\subsection{Stationary distribution}\label{statinary_distn_MRP} In this section, we determine the stationary distribution under a  0-1 type of threshold policy $n$. Thus,  action $a=0$ is taken in states $0,1,2,\cdots, n$ and action $a=1$ in states $n+1, n+2,...$
The transition diagram for the evolution of Markov chain is shown in Figure \ref{fig:transition_diagram_model2_machine_repairman}. 
\begin{figure}[ht]
\centering
\resizebox{.65\textwidth}{!}{\input{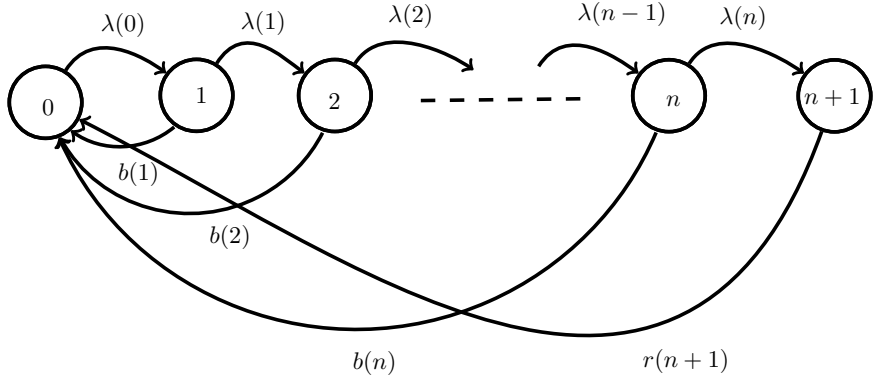}}
\caption{Transition diagram under the threshold policy $n$ for machine repairman problem}\label{fig:transition_diagram_model2_machine_repairman}
\end{figure}
The balance equations for the stationary distribution under the threshold policy $n$ are given by
\begin{eqnarray}{
\lambda(0)\pi^n(0)} &= & {\psi(1)\pi^n(1) + \psi(2)\pi^n(2) + \cdots + \psi(n)\pi^n(n)+ r(n+1)\pi^n(n+1)},  \\
\lambda(m)\pi^n(m) &=& (\lambda(m+1) + \psi(m+1))\pi^n(m+1)~\text{ for }m=0,~1,~2,...,~n-1,\nonumber \\
\lambda(n)\pi^n(n) &=& r(n+1)\pi^n({n+1}).\nonumber 
\end{eqnarray}

Using $\sum\limits_{m=1}^{n+1}\pi^n(m) = 1$, one obtains 
\begin{eqnarray}
\pi_k^{n_k}(m_k) &=& \frac{P_{m_k}}{\lambda_k(m_k)\left(\sum\limits_{i=0}^{n_k}\frac{P_i}{\lambda_k(i)} +\frac{P_{n_k}}{r_k(n_k+1)}\right)}~\forall~m_k=0,1,2,...n_k,\nonumber \\
\pi_k^{n_k}{(n_k+1)} &=& \frac{P_{n_k}}{r_k(n_k+1)\left(\sum\limits_{i=0}^{n_k}\frac{P_i}{\lambda_k(i)} +\frac{P_{n_k}}{r_k(n_k+1)}\right)}, \label{eq:f}\\
\pi_k^{n_k}(m_k) &=& 0 ~\forall~m_k = n_k+2, \cdots  \nonumber
\end{eqnarray}
where $P_i = \prod\limits_{j=1}^ip_k(j)$ and $p_k(j) = \frac{\lambda_k(j)}{\lambda_k(j) + \psi_k(j)}$; $P_0 = 1$.

\subsection{Indexability}\label{MRP_indexability}
\begin{lem}\label{machine_indexability}
Machine $k$ is indexable if the repair rates are non-decreasing in their state, i.e., $r_k(n) \le r_k(n+1)~\forall~n$, and {$r_k(1)>0$}. In particular, all machines are indexable for state-independent repair rates. 
\end{lem}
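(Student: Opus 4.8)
The plan is to deduce the lemma from Proposition~\ref{rmk_on_indexability}. The transition rates~\eqref{eq:trmr} fit case~$(i)$ of Proposition~\ref{threshold_optimality}: under the active action the only transition is $n\mapsto 0$ (no upward jump), under the passive action the transitions are $n\mapsto n+1$ and $n\mapsto 0$ (an upward jump of at most one), and there are no impulses. Hence an optimal solution of the relaxed subproblem~\eqref{ergodic_subproblem_k} is a threshold policy of $0$-$1$ type, and by Proposition~\ref{rmk_on_indexability} it is enough to show that $\mathbb{E}(f(N^{n},S^{n}(N^{n})))=\sum_{m=0}^{n}\pi^{n}(m)$ is non-negative and strictly increasing in the threshold~$n$. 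Non-negativity is immediate, being a partial sum of stationary probabilities, so everything reduces to the strict increase.

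For this I would use the closed form of $\pi^n$ derived in Appendix~\ref{statinary_distn_MRP}. Writing $A_n:=\sum_{i=0}^{n}P_i/\lambda(i)$ and $B_n:=P_n/r(n+1)$, Equation~\eqref{eq:f} reads $\pi^{n}(m)=\dfrac{P_m/\lambda(m)}{A_n+B_n}$ for $m\le n$, hence
$$\sum_{m=0}^{n}\pi^{n}(m)=\frac{A_n}{A_n+B_n}=\frac{1}{1+B_n/A_n}.$$
So the quantity is strictly increasing in $n$ if and only if $B_n/A_n$ is strictly decreasing, equivalently if and only if $h_n:=\dfrac{r(n+1)A_n}{P_n}$ is strictly increasing in $n$.

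The key step is a one-line recursion for $h_n$. From $A_{n+1}=A_n+P_{n+1}/\lambda(n+1)$ one gets
$$h_{n+1}=\frac{r(n+2)A_{n+1}}{P_{n+1}}=\frac{r(n+2)A_n}{P_{n+1}}+\frac{r(n+2)}{\lambda(n+1)}.$$
Because $r(\cdot)$ is non-decreasing, $r(n+2)\ge r(n+1)$; because $P_{n+1}=P_n\,p(n+1)$ with $p(n+1)\le 1$, $1/P_{n+1}\ge 1/P_n$; and $A_n>0$. Thus the first term on the right-hand side is at least $r(n+1)A_n/P_n=h_n$. The hypothesis $r(1)>0$ together with monotonicity of $r(\cdot)$ gives $r(n+2)>0$, while $\lambda(n+1)\in(0,\infty)$, so the extra term $r(n+2)/\lambda(n+1)$ is strictly positive; therefore $h_{n+1}>h_n$. (The hypothesis $r(1)>0$ also makes the threshold-$n$ chain ergodic for every $n\ge0$, so the stationary distribution above is well defined; for state-independent repair rates $r(n)\equiv r>0$ all of this is automatic, which gives the final sentence of the lemma.) Combining, $\mathbb{E}(f(N^{n},S^{n}(N^{n})))$ is non-negative and strictly increasing in~$n$, and indexability follows from Proposition~\ref{rmk_on_indexability}.

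The only real obstacle is the monotonicity of $h_n$; once one identifies the relevant ratio as $r(n+1)A_n/P_n$, the recursion for $A_n$ makes the increase transparent, and the role of the extra assumption $r(1)>0$ (which propagates to $r(n)>0$ for all $n\ge1$ by monotonicity, hence ergodicity and positivity of $h_n$) becomes clear. The one minor point to dispatch is the boundary threshold $n=-1$ (repair in every state), for which $\sum_{m\le-1}\pi^{-1}(m)=0<\pi^{0}(0)=\sum_{m=0}^{0}\pi^{0}(m)$, so monotonicity is not broken at the left end of the range of thresholds.
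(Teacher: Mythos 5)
Your proposal is correct and follows essentially the same route as the paper: both reduce the lemma, via Proposition~\ref{rmk_on_indexability} and the closed-form stationary distribution of Appendix~\ref{statinary_distn_MRP}, to showing that $\sum_{m=0}^{n}\pi_k^{n}(m)$ (equivalently $1-\pi_k^{n}(n+1)$) is strictly increasing in the threshold under $r_k(n)\le r_k(n+1)$ and $r_k(1)>0$. The only difference is in the final bookkeeping: the paper checks the sign of the explicit difference $\pi_k^{n}(n+1)-\pi_k^{n-1}(n)$, whereas you establish the same monotonicity through the recursion for $h_n=r(n+1)A_n/P_n$, a slightly tidier piece of algebra resting on the identical hypotheses.
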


\begin{proof}{Proof.}
From Proposition \ref{proposition:monotone_index}, it follows that machine $k$ is indexable if $\mathbb{E}(f_k(N_k^{n}, S_k^{n}({N}_k^{n})))$ is strictly increasing in $n$. Recall that   $f_k(n, a) = \mathbf{1}_{\{ a = 0\}}$. Under the  0-1 type of threshold structure policy, with threshold $n$, we have 
$$\mathbb{E}(f_k(N_k^{n}, S_k^{n}({N}_k^{n}))) =\sum\limits_{m=0}^{n}\pi_k^{n}(m).$$
Thus, machine $k$ is indexable if $\sum\limits_{m=0}^{n}\pi_k^{n}(m)$ is strictly increasing in $n$. Since $\pi_k^{n}(m)=0$ for $m>n+1$, this is equivalent to proving that  $\pi_k^{n}(n+1)$ is strictly decreasing in $n$. 
From Equation~\eqref{eq:f} and some algebra, we obtain that 
$$\pi_k^{n}(n+1) - \pi_k^{n-1}(n)  = \frac{\left(\frac{\lambda_k(n)(r_k(n) -r_k(n+1))-\psi_k(n)r_k(n+1)}{\lambda_k(n) + \psi_k(n)}\right)\sum\limits_{i=0}^{n-1}\frac{P_i}{\lambda_k(i)} - r_k(n+1)\frac{ P_{n}}{\lambda_k(n)}}{r_k(n)r_k(n+1)\left(\sum\limits_{i=0}^{n}\frac{P_i}{\lambda_k(i)} +\frac{P_{n}}{r_k(n+1)}\right)\left(\sum\limits_{i=0}^{n+1}\frac{P_i}{\lambda_k(i)} +\frac{P_{n+1}}{r_k(n+2)}\right)}. $$
Note that the denominator is strictly positive. Since $r_k(n)$ is non-decreasing and {$r_k(1)>0$},  the numerator is strictly negative. That is, the result follows. 
\end{proof}

\subsection{Whittle's index: Proof of Proposition \ref{machine_index_general_model}}
Since  a 0-1 type of threshold policy is optimal, using Proposition \ref{proposition:monotone_index}, the Whittle index is given by Equation (\ref{general_index_value}), i.e., 
\begin{equation}\label{birth_death_index}
W_k(n) = \frac{\mathbb{E}(C_k(N_k^n, S_k^n(N_k^n))) - \mathbb{E}(C_k(N_k^{n-1}, S_k^{n-1}(N_k^{n-1})))}{\sum\limits_{m=0}^n\pi_k^n(m)-\sum\limits_{m=0}^{n-1}\pi_k^{n-1}(m)},
\end{equation}
if (\ref{birth_death_index}) is non-decreasing. 
The expected cost under threshold policy~$n$ in the nominator is given by 
$$\mathbb{E}(C_k(N_k^n, S_k^n(N_k^n))) = \sum\limits_{m=1}^{n}\left[\psi_k(m)L_k^b(m) + C_k^{d}(m)\right]\pi_k^n(m) +  r_k(n+1)L_k^r(n+1)\pi_k^n(n+1).$$
Using the expression for the stationary distribution as derived in Appendix \ref{statinary_distn_MRP}, we obtain that  the denominator of (\ref{birth_death_index}) simplifies to
$$\sum\limits_{i=0}^{n} \pi_k^{n}(i) - \sum\limits_{i=0}^{n-1} \pi_k^{{n}-1}(i)  = \frac{\frac{P_{n-1}}{r_k(n)}\sum\limits_{i=0}^{n}\frac{P_i}{\lambda_k(i)} - \frac{P_{n}}{r_k(n+1)}\sum\limits_{i=0}^{n-1}\frac{ P_i}{\lambda_k(i)}}{\left(\sum\limits_{i=0}^{n}\frac{P_i}{\lambda_k(i)} +\frac{P_{n}}{r_k(n+1)}\right)\left(\sum\limits_{i=0}^{n-1}\frac{P_i}{\lambda_k(i)} +\frac{P_{n-1}}{r_k(n)}\right)},$$
where $P_i = \prod\limits_{j=1}^ip_k(j)$ and $p_k(j) = \frac{\lambda_k(j)}{\lambda_k(j) + \psi_k(j)}$; $P_0 = 1$. After some algebra, we obtain that  (\ref{birth_death_index}) simplifies to the one stated in Proposition \ref{machine_index_general_model}.


\subsection{Model 1: Deterioration cost per unit}\label{deterioration}
We consider now a particular case when there are no breakdowns. Thus, $\psi_k(n_k) = 0$ and $L_k^b(n_k) = 0$. 
This simplifies since $p_k(j) = 1$ and $P_i =1$, and hence the expression in  Proposition \ref{machine_index_general_model}  simplyfies to
\begin{equation}\label{general_index_model1}
\frac{ \left(\sum\limits_{i=1}^{n} \frac{C_k^{d}(i)}{\lambda_k(i)} + L_k^{r}(n+1)\right)\left(\sum\limits_{i=0}^{n-1}\frac{1}{\lambda_k(i)} + \frac{1}{r_k(n)}\right)- \left(\sum\limits_{i=1}^{n-1} \frac{C_k^{d}(i)}{\lambda_k(i)} + {L_k^{r}(n)}\right) \left(\sum\limits_{i=0}^{n}\frac{1}{\lambda_k(i)} + \frac{1}{r_k(n+1)}\right)}{\frac{1}{r_k(n)}\sum\limits_{i=0}^{n}\frac{1}{\lambda_k(i)} - \frac{1}{r_k(n+1)}\sum\limits_{i=0}^{n-1}\frac{1}{\lambda_k(i)}}.
\end{equation}
If in addition $r_k(n)= r_k$ for all $n$, we obtain (after some algebra) 
from  Equation (\ref{general_index_model1}) that 
\begin{equation}
W_k(n)= r_k\left[ \sum\limits_{i=0}^{n-1}\frac{C_k^{d}(n) - C_k^{d}(i)}{\lambda_k(i)} + \frac{C_k^{d}(n) - r_k L_k^{r}}{r_k}\right].
\end{equation}
In addition, 
$$W_k(n) - W_k(n+1) = r_k \left[ (C_k^{d}(n) -C_k^{d}(n+1))\left(\sum\limits_{i=0}^n \frac{1}{\lambda_k(i)} + \frac{1}{r_k} \right)\right],$$
which is negative when the $C_k^d(n)$ is non-decreasing.

\subsection{Model 2: Lump cost for breakdown}\label{lump_cost}
Here, we assume that $C_k^{d}(n_k) = 0$,  $r_k(n)= r_k(n+1)=r_k$, $L_k^r(n)= R_k, L_k^b(n) = B_k~\forall~n$, and $\psi_k(n)$ is an increasing sequence. 
 From Proposition \ref{machine_index_general_model}, Whittle's index simplifies to~\eqref{general_index}.
Hence,   $W_k(n) - W_k(n+1)$ simplifies to: 
$$W_k(n) - W_k(n+1) = \frac{r_kB_k\left(\frac{P_n}{r_k} +\sum\limits_{i=0}^{n}\frac{P_i}{\lambda_k(i)} \right)\left(\frac{1}{\psi(n+1) }-\frac{1}{\psi(n)}\right)}{\left((1-p_k(n))\sum\limits_{i=0}^{n}\frac{P_i}{\lambda_k(i)} + \frac{p_k(n)P_n}{\lambda_k(n)} \right)\left((1-p_k(n+1))\sum\limits_{i=0}^{n+1}\frac{P_i}{\lambda_k(i)} + \frac{p_k(n+1)P_{n+1}}{\lambda_k(n+1)}\right)},$$
which is negative under the increasing breakdown rates assumption.


%
%
%

\section{Congestion control in TCP}

In Section \ref{TCP} we described a TCP model, where multiple users (flows) are trying to transmit packets through a bottleneck router as shown in Figure \ref{TCP_router}. 

\begin{figure}[htb!]\centering
\includegraphics[scale=0.2]{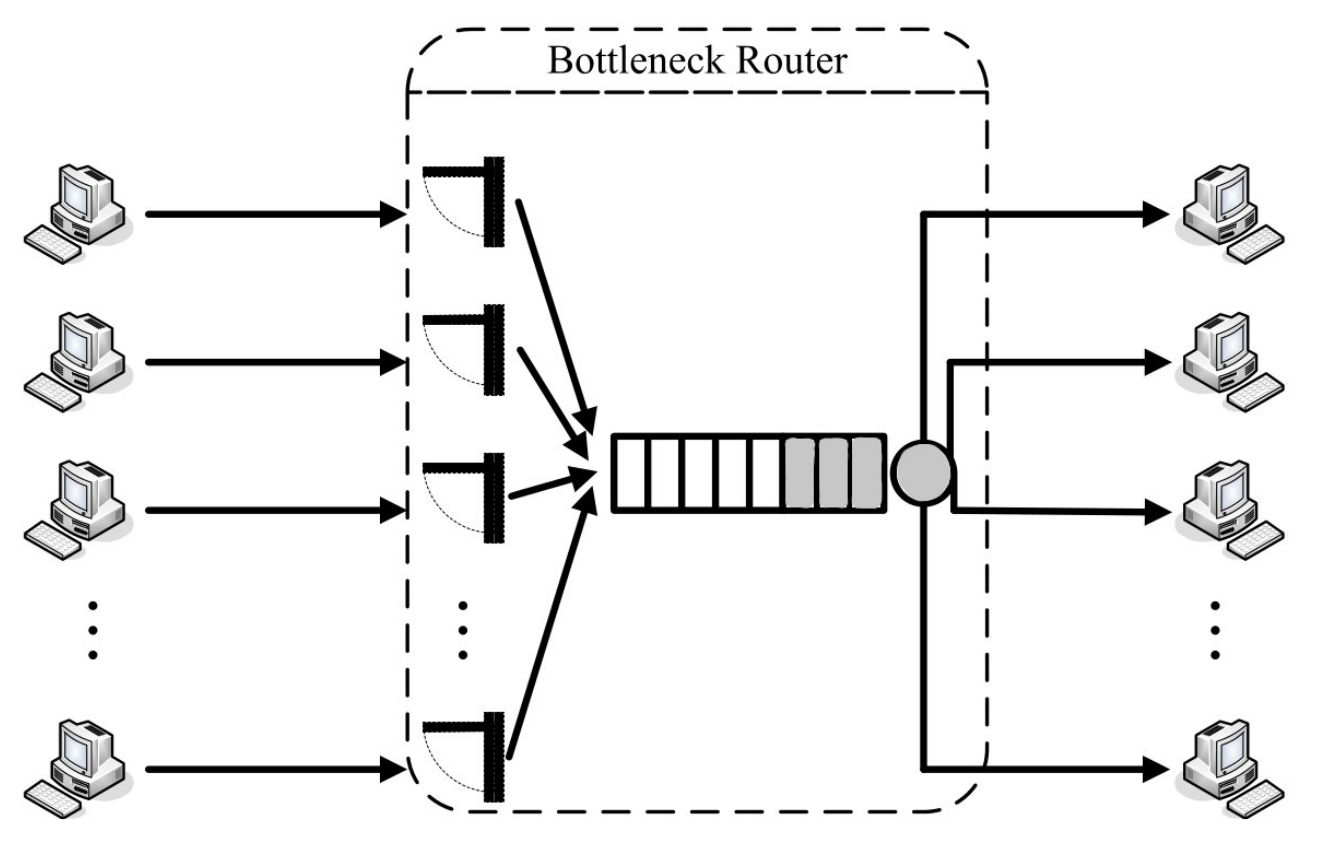}
\caption{A bottleneck router in TCP with multiple flows \citep{avrachenkov2013congestion}. }\label{TCP_router}
\end{figure}

\subsection{Stationary distribution}\label{stationary_dist_TCP}\label{indexability_TCP}

Under a 1-0 type threshold policy $n$,  action $a=1$ is taken in  states $0,1,2, \cdots, n$ and action $a=0$ in states $n+1, n+2,...$ When action $a=0$ is taken at state $n+1$, the state instantaneously changes to $S:=\max\{\lfloor\gamma.(n+1) \rfloor, 1\}$. Figure \ref{TPC_td} shows the rates and its stationary distribution is given by
\begin{eqnarray}\nonumber
\pi^n(m) &=& 0;~m = 0,1,2,...,S-1,\\\nonumber
\pi^n(m) &=& \frac{1}{n-S+1};~m = S, S+1, ...,n. 
\end{eqnarray} 

\begin{figure}[ht]
\centering
\resizebox{.65\textwidth}{!}{\input{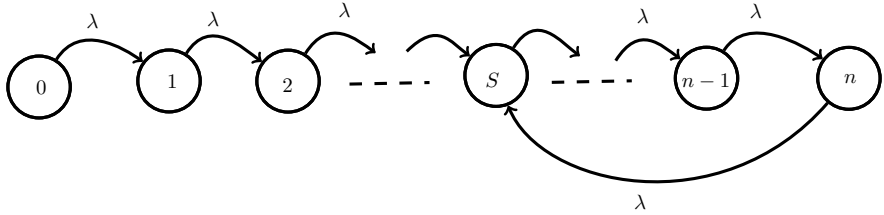}}
\caption{Transition diagram under the threshold policy `$n$' for TCP congestion control problem}\label{TPC_td}
\end{figure}

We then obtain $$\mathbb{E}(f_k(N_k^{n}, S_k^{n}({N}_k^{n})))  =\sum\limits_{m=0}^{n}m\pi_k^{n}(m)= \frac{n^2 + n - S(S-1)}{2(n-S+1)},$$
where $S = \max\{\lfloor\gamma_k.(n+1) \rfloor, 1\}$. It can be easily argued that 
\begin{equation}
\mathbb{E}(f_k(N_k^{n}, S_k^{n}({N}_k^{n}))) - \mathbb{E}(f_k(N_k^{n-1}, S_k^{n-1}({N}_k^{n-1}))) = 1/2>0.
\label{eq:ff}
\end{equation}
 Thus, $\mathbb{E}(f_k(N_k^{n}, S_k^{n}({N}_k^{n})))$ is strictly increasing in $n$ and the result follows. 

\subsection{Expression of Whittle's index: Proof of Lemma \ref{Index_for_TCP}}\label{expression_index}

Since 1-0 type of threshold policies are optimal, using Proposition \ref{proposition:monotone_index}, the Whittle index is given by
\begin{equation}\label{index_TCP}
W_k(n) = \frac{\mathbb{E}(T_k^n(N_k^n, S_k^n({N}_k^n))) - \mathbb{E}(T_k^{n-1}(N_k^{{n-1}}, S_k^{{n-1}}({N}_k^{{n-1}})))}{\mathbb{E}(f_k(N_k^{n}, S_k^n({N}_k^{n}))) - \mathbb{E}(f_k(N_k^{{n-1}}, S_k^{{n-1}}({N}_k^{{n-1}})))},
\end{equation}
if Equation (\ref{index_TCP})
is non-increasing in $n$. 

We have 
$$\mathbb{E}(T_k^n(N_k^n, S_k^n({N}_k^n))) = \sum_{m=0}^nC_k(m,1)\lambda_k \pi_k^n(m) + C_k(n+1,0)\lambda_k\pi_k^n(n+1),$$
which simplifies to
 \[ \mathbb{E}(T_k^n(N_k^n, S_k^n({N}_k^n)))  =
  \begin{cases}
\frac{\lambda_k\sum\limits_{m=S}^{n}(1-(1+m)^{1-\alpha})}{(n-S+1)(1-\alpha)}           & \quad \text{if } \alpha \neq 1,\\
    \frac{-\lambda_k\sum\limits_{m=S}^{n}\log(1+m)}{(n-S+1)}  & \quad \text{if } \alpha = 1;
  \end{cases}
\]
Together with \eqref{eq:ff} and Equation (\ref{index_TCP}), results in the Whittle index as stated in Lemma \ref{Index_for_TCP}.

\section{Content delivery network}\label{CDN_appendix}

We consider here the content delivery network as described in Section~\ref{CDN}, see also  Figure \ref{fig:single_bandit_queue}

\begin{figure}[ht]
\centering
\resizebox{.65\textwidth}{!}{
\ifx\du\undefined
  \newlength{\du}
\fi
\setlength{\du}{15\unitlength}
\begin{tikzpicture}
\pgftransformxscale{1.000000}
\pgftransformyscale{-1.000000}
\definecolor{dialinecolor}{rgb}{0.000000, 0.000000, 0.000000}
\pgfsetstrokecolor{dialinecolor}
\definecolor{dialinecolor}{rgb}{1.000000, 1.000000, 1.000000}
\pgfsetfillcolor{dialinecolor}
\pgfsetlinewidth{0.100000\du}
\pgfsetdash{}{0pt}
\pgfsetdash{}{0pt}
\pgfsetbuttcap
{
\definecolor{dialinecolor}{rgb}{0.000000, 0.000000, 0.000000}
\pgfsetfillcolor{dialinecolor}
\definecolor{dialinecolor}{rgb}{0.000000, 0.000000, 0.000000}
\pgfsetstrokecolor{dialinecolor}
\draw (27.950000\du,10.150000\du)--(33.600000\du,10.150000\du);
}
\pgfsetlinewidth{0.100000\du}
\pgfsetdash{}{0pt}
\pgfsetdash{}{0pt}
\pgfsetbuttcap
{
\definecolor{dialinecolor}{rgb}{0.000000, 0.000000, 0.000000}
\pgfsetfillcolor{dialinecolor}
\definecolor{dialinecolor}{rgb}{0.000000, 0.000000, 0.000000}
\pgfsetstrokecolor{dialinecolor}
\draw (27.900000\du,12.000000\du)--(33.615402\du,12.005402\du);
}
\pgfsetlinewidth{0.100000\du}
\pgfsetdash{}{0pt}
\pgfsetdash{}{0pt}
\pgfsetbuttcap
{
\definecolor{dialinecolor}{rgb}{0.000000, 0.000000, 0.000000}
\pgfsetfillcolor{dialinecolor}
\definecolor{dialinecolor}{rgb}{0.000000, 0.000000, 0.000000}
\pgfsetstrokecolor{dialinecolor}
\draw (33.600000\du,10.100000\du)--(33.550000\du,12.000000\du);
}
\definecolor{dialinecolor}{rgb}{1.000000, 1.000000, 1.000000}
\pgfsetfillcolor{dialinecolor}
\pgfpathellipse{\pgfpoint{40.965367\du}{10.884040\du}}{\pgfpoint{1.618107\du}{0\du}}{\pgfpoint{0\du}{1.617330\du}}
\pgfusepath{fill}
\pgfsetlinewidth{0.100000\du}
\pgfsetdash{}{0pt}
\pgfsetdash{}{0pt}
\pgfsetmiterjoin
\definecolor{dialinecolor}{rgb}{0.000000, 0.000000, 0.000000}
\pgfsetstrokecolor{dialinecolor}
\pgfpathellipse{\pgfpoint{40.965367\du}{10.884040\du}}{\pgfpoint{1.618107\du}{0\du}}{\pgfpoint{0\du}{1.617330\du}}
\pgfusepath{stroke}
\definecolor{dialinecolor}{rgb}{0.000000, 0.000000, 0.000000}
\pgfsetstrokecolor{dialinecolor}
\node at (40.965367\du,11.079040\du){Server};
\pgfsetlinewidth{0.050000\du}
\pgfsetdash{}{0pt}
\pgfsetdash{}{0pt}
\pgfsetbuttcap
{
\definecolor{dialinecolor}{rgb}{0.000000, 0.000000, 0.000000}
\pgfsetfillcolor{dialinecolor}
\pgfsetarrowsend{stealth}
\definecolor{dialinecolor}{rgb}{.000000, 0.000000, 1.000000}
\pgfsetstrokecolor{dialinecolor}
\pgfpathmoveto{\pgfpoint{23.799776\du}{8.499700\du}}
\pgfpatharc{144}{96}{5.908990\du and 5.908990\du}
\pgfusepath{stroke}
}
\pgfsetlinewidth{0.050000\du}
\pgfsetdash{}{0pt}
\pgfsetdash{}{0pt}
\pgfsetbuttcap
{
\definecolor{dialinecolor}{rgb}{.000000, 0.000000, 0.000000}
\pgfsetfillcolor{dialinecolor}
\pgfsetarrowsend{stealth}
\definecolor{dialinecolor}{rgb}{1.000000, 0.000000, .000000}
\pgfsetstrokecolor{dialinecolor}
\pgfpathmoveto{\pgfpoint{30.799776\du}{11.00\du}}
\pgfpatharc{144}{96}{6.908990\du and 8.908990\du}
\pgfusepath{stroke}
}
\pgfsetlinewidth{0.100000\du}
\pgfsetdash{}{0pt}
\pgfsetdash{}{0pt}
\pgfsetbuttcap
{
\definecolor{dialinecolor}{rgb}{0.000000, 0.000000, 0.000000}
\pgfsetfillcolor{dialinecolor}
\definecolor{dialinecolor}{rgb}{0.000000, 0.000000, 0.000000}
\pgfsetstrokecolor{dialinecolor}
\draw (31.166298\du,10.056298\du)--(31.116298\du,11.956298\du);
}
\pgfsetlinewidth{0.100000\du}
\pgfsetdash{}{0pt}
\pgfsetdash{}{0pt}
\pgfsetbuttcap
{
\definecolor{dialinecolor}{rgb}{0.000000, 0.000000, 0.000000}
\pgfsetfillcolor{dialinecolor}
\definecolor{dialinecolor}{rgb}{0.000000, 0.000000, 0.000000}
\pgfsetstrokecolor{dialinecolor}
\draw (31.931298\du,10.111298\du)--(31.881298\du,12.011298\du);
}
\pgfsetlinewidth{0.100000\du}
\pgfsetdash{}{0pt}
\pgfsetdash{}{0pt}
\pgfsetbuttcap
{
\definecolor{dialinecolor}{rgb}{0.000000, 0.000000, 0.000000}
\pgfsetfillcolor{dialinecolor}
\definecolor{dialinecolor}{rgb}{0.000000, 0.000000, 0.000000}
\pgfsetstrokecolor{dialinecolor}
\draw (32.796298\du,10.066298\du)--(32.746298\du,11.966298\du);
}
\pgfsetlinewidth{0.100000\du}
\pgfsetdash{}{0pt}
\pgfsetdash{}{0pt}
\pgfsetbuttcap
{
\definecolor{dialinecolor}{rgb}{0.000000, 0.000000, 0.000000}
\pgfsetfillcolor{dialinecolor}
\definecolor{dialinecolor}{rgb}{0.000000, 0.000000, 0.000000}
\pgfsetstrokecolor{dialinecolor}
\draw (30.411298\du,10.121298\du)--(30.361298\du,12.021298\du);
}
\pgfsetlinewidth{0.100000\du}
\pgfsetdash{}{0pt}
\pgfsetdash{}{0pt}
\pgfsetbuttcap
{
\definecolor{dialinecolor}{rgb}{0.000000, 0.000000, 0.000000}
\pgfsetfillcolor{dialinecolor}
\definecolor{dialinecolor}{rgb}{0.000000, 0.000000, 0.000000}
\pgfsetstrokecolor{dialinecolor}
\draw (29.676298\du,10.076298\du)--(29.626298\du,11.976298\du);
}
\definecolor{dialinecolor}{rgb}{0.000000, 0.000000, 0.000000}
\pgfsetstrokecolor{dialinecolor}
\node[anchor=west] at (19.100000\du,7.450000\du){State dependent arrivals, $\lambda(n)$};
\definecolor{dialinecolor}{rgb}{0.000000, 0.000000, 0.000000}
\pgfsetstrokecolor{dialinecolor}
\node[anchor=west] at (28.150000\du,15.50000\du){State dependent abandonments, $\theta(n)$};
\definecolor{dialinecolor}{rgb}{0.000000, 0.000000, 0.000000}
\pgfsetstrokecolor{dialinecolor}
\node[anchor=west] at (33.000000\du,5.500000\du){Actions: ($a=1$ or $a=0$) };
\definecolor{dialinecolor}{rgb}{0.000000, 0.000000, 0.000000}
\pgfsetstrokecolor{dialinecolor}
\node[anchor=west] at (33.000000\du,6.300000\du){$a=1:$ Activate the server};
\definecolor{dialinecolor}{rgb}{0.000000, 0.000000, 0.000000}
\pgfsetstrokecolor{dialinecolor}
\node[anchor=west] at (33.000000\du,7.100000\du){and instantaneously clear the entire queue};
\pgfsetlinewidth{0.100000\du}
\pgfsetdash{}{0pt}
\definecolor{dialinecolor}{rgb}{1.000000, 1.000000, 0.000000}
\pgfsetfillcolor{dialinecolor}
\pgfpathellipse{\pgfpoint{36.450000\du}{9.300000\du}}{\pgfpoint{0.300000\du}{0\du}}{\pgfpoint{0\du}{0.300000\du}}
\pgfusepath{fill}
\definecolor{dialinecolor}{rgb}{0.000000, 0.000000, 0.000000}
\pgfsetstrokecolor{dialinecolor}
\pgfpathellipse{\pgfpoint{36.450000\du}{9.300000\du}}{\pgfpoint{0.300000\du}{0\du}}{\pgfpoint{0\du}{0.300000\du}}
\pgfusepath{stroke}
\definecolor{dialinecolor}{rgb}{0.000000, 0.000000, 0.000000}
\pgfsetstrokecolor{dialinecolor}
\draw (35.250000\du,9.900000\du)--(37.650000\du,9.900000\du);
\definecolor{dialinecolor}{rgb}{0.000000, 0.000000, 0.000000}
\pgfsetstrokecolor{dialinecolor}
\draw (36.450000\du,9.600000\du)--(36.450000\du,11.100000\du);
\definecolor{dialinecolor}{rgb}{0.000000, 0.000000, 0.000000}
\pgfsetstrokecolor{dialinecolor}
\draw (36.450000\du,11.100000\du)--(35.250000\du,12.400000\du);
\definecolor{dialinecolor}{rgb}{0.000000, 0.000000, 0.000000}
\pgfsetstrokecolor{dialinecolor}
\draw (36.450000\du,11.100000\du)--(37.650000\du,12.400000\du);
\definecolor{dialinecolor}{rgb}{0.000000, 0.000000, 0.000000}
\pgfsetstrokecolor{dialinecolor}
\node at (36.450000\du,13.595000\du){Controller};
\end{tikzpicture}}
\caption{Optimal clearing framework as single-armed restless bandit}\label{fig:single_bandit_queue}
\end{figure}
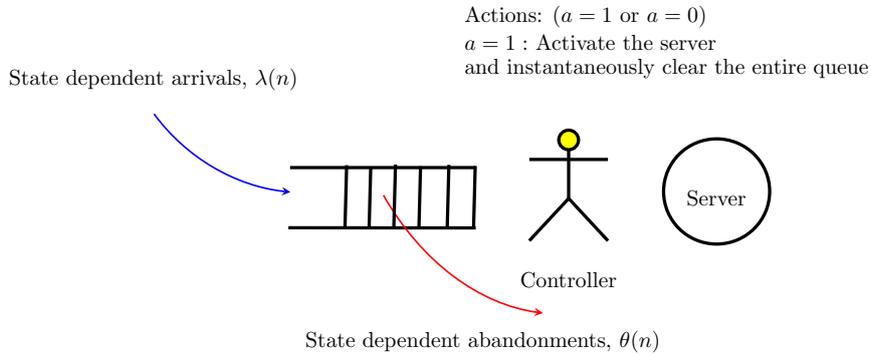

\subsection{Stationary distribution}
Under a  0-1 type of threshold policy $n$,  action $a=0$ is taken in states $0,1,2..., n$ and action $a=1$ in states $n+1, n+2,...$   The transition diagram  is shown in Figure \ref{fig:transition_diagram_CDN}.
\begin{figure}[htb!]
\centering
\resizebox{.65\textwidth}{!}{\input{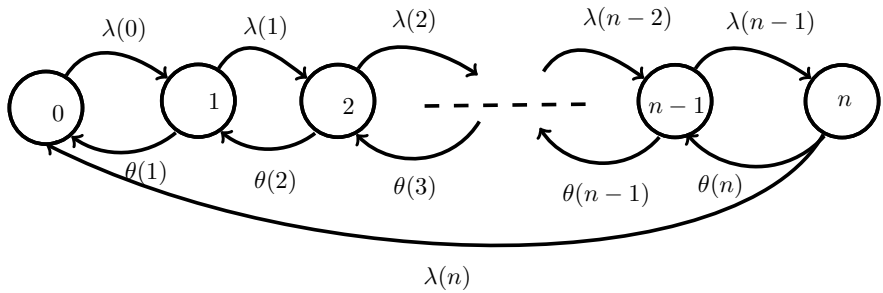}}
\caption{Transition diagram under the threshold policy $n$ in the content delivery network}\label{fig:transition_diagram_CDN}
\end{figure}

The balance equations under this chain are
\begin{eqnarray}
\pi^n(0)\lambda(0) &=& \pi^n(1)\theta(1) + \lambda(n)\pi^n(n),\\
(\lambda(k) + \theta(k))\pi^n(k) & = &\lambda(k-1)\pi^n({k-1})+ \theta(k+1)\pi^n({k+1})~\text{ for }k=1,~2,..., n-1,\\
\lambda(n-1)\pi^n({n-1}) &=& \theta(n)\pi^n(n) + \lambda(n)\pi^n(n),
\end{eqnarray}
which together with the  normalization condition $\sum\limits_{i=0}^n\pi^n(i) = 1$ results in the following stationary distribution: 
\begin{eqnarray}\nonumber
\pi^{n}(m) &=& \frac{\pi^{n}{(n)}\lambda(n)}{\lambda(m)}\left[ 1+\sum\limits_{i=1}^{n-m}p(m+1,m+i) \right]  ~\forall~m=0,1,2,...n-1,\\\label{Pi_CDN}
\pi^{n}{(n)} &=& \left(1+\sum\limits_{k=0}^{n-1}\frac{\lambda(n)}{\lambda(k)}\left[1+\sum\limits_{i=1}^{n-k}p(k+1,k+i)\right]\right)^{-1},\\
\pi^{n}(m) &=& 0 ~\forall~m = n+1, \cdots, 
\end{eqnarray}
where $p(k+1, k+i) = \dfrac{\theta(k+1)\theta(k+2)...\theta(k+i)}{\lambda(k+1)\lambda(k+2)...\lambda(k+i)}~\forall~i\ge 1$. 

{The summation term in denominator of (\ref{Pi_CDN}) is strictly increasing in $n$ if $\lambda(n)$ is non-decreasing. Thus, $\pi^{n}{(n)}$ will be strictly decreasing in $n$ under non-decreasing assumption on $\lambda(n)$.
}


 \subsection{Whittle's index: Proof of Lemma \ref{index_result_CDN}}
 

The expected cost  under threshold policy $n$ is given by
$$\mathbb{E}(T^n(N^n, S^n({N}^n))) = \sum\limits_{i=1}^{n}(iC^h(i) + \theta(i)L^a(i))\pi^n(i) +  \lambda(n)L_s^\infty(n+1) \pi^{n}(n).$$
Similarly, for the threshold policy $n-1$ 
$$\mathbb{E}(T^{n-1}(N^{n-1}, S^{n-1}({N}^{n-1}))) = \sum\limits_{i=1}^{n-1}(iC^h(i) + \theta(i)L^a(i))\pi^{n-1}(i) +  \lambda(n-1)L_s^\infty(n) \pi^{{n-1}}(n-1).$$
From Proposition \ref{proposition:monotone_index}, 
we get the expression  as stated in Lemma~\ref{index_result_CDN}. 

\end{document}